\definecolor{darkgreen}{rgb}{0,0.45,0}
\numberwithin{equation}{section}
\theoremstyle{plain}
\newtheorem{Theorem}{Theorem}
\newtheorem{thm}[Theorem]{Theorem}
\newtheorem{Prop}[Theorem]{Proposition}
\newtheorem{Lemma}[Theorem]{Lemma}
\theoremstyle{definition}
\newtheorem{Definition}[Theorem]{Definition}
\newtheorem{Example}[Theorem]{Example}
\newtheorem{Remark}[Theorem]{Remark}
\newtheorem{Properties}[Theorem]{Properties}
\newtheorem{defn}[Theorem]{Definition}
\newtheorem{example}[Theorem]{Example}
\newtheorem*{notation}{Notation}
\newtheorem{Assumptions}[Theorem]{Assumptions}
\def\matrixobject@{%
 \edef \next@{={\DirectionfromtheDirection@ }}%
 \expandafter \toks@ \next@ \plainxy@
 \let\xy@@ix@=\xyq@@toksix@
 \xyFN@ \OBJECT@}
\let\xy@entry@@norm=\entry@@norm
\def\entry@@norm@patched{%
 \let\object@=\matrixobject@
 \xy@entry@@norm }
\newcommand{\wfscat}{pre-model category}
\newcommand{\wfscats}{pre-model categories}
\newcommand{\C}{{\mathcal C}}
\newcommand{\Ccal}{\mathcal{C}}
\newcommand{\Acal}{\mathcal{A}}
\newcommand{\Vcal}{\mathcal{V}}
\newcommand{\Cat}{\mathbf{Cat}}
\newcommand{\Set}{\mathbf{Set}}
\newcommand{\Gph}{\mathbf{Gph}}
\newcommand{\alg}[1]{\boldsymbol{#1}}
\newcommand{\cat}[1]{\mathbf{#1}}
\newcommand{\Coalg}[1]{\mathsf{#1}\text-\cat{Coalg}}
\newcommand{\Alg}[1]{\mathsf{#1}\text-\cat{Alg}}
\newcommand{\atwo}{{\mathbf 2}}
\newcommand{\athree}{{\mathbf 3}}
\newcommand{\awfs}{\textsc{awfs}\xspace}
\newcommand{\JFib}{\cat{J}\text{-}\cat{Fib}}
\title{Algebraically cofibrant and fibrant objects revisited}
\author{John Bourke}
\address{Department of Mathematics and Statistics, Masaryk University, Kotl\'a\v rsk\'a 2, Brno 61137, Czech Republic}
\email{bourkej@math.muni.cz}
\author{Simon Henry} 
\address{Department of Mathematics and Statistics, University of Ottawa, STEM complex, 150 Louis-Pasteur, K1N 6N5, Ottawa, ON, Canada}\email{shenry2@uottawa.ca}
\thanks{The first named author acknowledges the support of the Grant Agency of the Czech Republic under the grant 19-00902S}
\begin{document}




\renewcommand{\thefootnote}{\fnsymbol{footnote}} 
\footnotetext{\emph{Keywords.} Algebraically cofibrant and fibrant objects, weak model categories. }
\footnotetext{\emph{2020 Mathematics Subject Classification.} Primary: 55U35; Secondary 18N40, 18C35.}
\renewcommand{\thefootnote}{\arabic{footnote}} 

 
 \leftmargini=2em

\def\xypic{\hbox{\rm\Xy-pic}}
\maketitle

\begin{abstract}

We extend all known results about transferred model structures on algebraically cofibrant and fibrant objects by working with weak model categories. We show that for an accessible weak model category there are always Quillen equivalent transferred weak model structures on both the categories of algebraically cofibrant and algebraically fibrant objects.  Under additional assumptions, these transferred weak model structures are shown to be left, right or Quillen model structures.  By combining both constructions, we show that each combinatorial weak model category is connected, via a chain of Quillen equivalences, to a combinatorial Quillen model category in which all objects are fibrant.

\end{abstract}

\section{Introduction}

Given a combinatorial Quillen model category $\Ccal$, one can construct a Quillen equivalent combinatorial model category in which all objects are fibrant \cite{Nikolaus2011Algebraic, Bourke2019Equipping}. This is achieved by considering the category $\Alg{T}$ of \emph{algebraically fibrant} objects in $\Ccal$, which contains objects of $\Ccal$ equipped with chosen lifts against all generating trivial cofibrations and morphisms preserving the chosen lifts.  The category $\Alg{T}$ is itself locally presentable and comes equipped with an adjunction:

\[ F : \Ccal \leftrightarrows \Alg{T} : U\]

where $U$ is the forgetful functor and $F$ its left adjoint --- the Quillen equivalent model structure on $\Alg{T}$ is obtained by right transfer along $U$. In the original paper of Nikolaus \cite{Nikolaus2011Algebraic}, this was done under the mild assumption that every trivial cofibration in $\Ccal$ is a monomorphism, but this assumption was later removed by the first named author in \cite{Bourke2019Equipping}.

A dual version of this result was developed by Ching and Riehl in \cite{ching2014coalgebraic}.  This uses the notion of algebraic weak factorisation system, which endows the cofibrant replacement construction with the structure of a comonad $Q$, whose coalgebras are the \emph{algebraically cofibrant} objects.  (The $T$ of the previous example is the corresponding fibrant replacement monad.)  Again, $\Coalg{Q}$ is locally presentable and comes equipped with an adjunction

\[ U: \Coalg{Q} \leftrightarrows \Ccal : G \]

where $U$ is the forgetful functor and $G$ its right adjoint, and one can try to left transfer the model structure along $U$. Ching and Riehl showed that if $\Ccal$ is a combinatorial \emph{simplicial} model category, and if $Q$ is the simplicial comonad obtained by running the \emph{enriched} algebraic small object argument, then the left transferred model structure exists and is Quillen equivalent to that on $\Ccal$. 

There are, however, some asymmetries in the above.  Firstly, there exist combinatorial model structures for which $\Coalg{Q}$ does not admit the left-transferred model structure --- a simple example was described to us by Alexander Campbell --- whereas $\Alg{T}$ always admits the right transferred model structure.  We will see, however, that $\Coalg{Q}$ is always a right semi-model category.  On the other hand, even if $\Ccal$ is only a combinatorial right semi-model category we will see that $\Alg{T}$ is a genuine Quillen model category. This result captures, for instance, algebraic Kan complexes on \emph{semi-simplicial sets} as well as Nikolaus' guiding example of algebraic Kan complexes on simplicial sets.  

One of our goals in the present paper is to understand the dualisable aspects of the above results.  Evidently, this should include results about semi-model categories.  We will work using the framework of \emph{weak model categories}, recently introduced by the second named author \cite{henry2018weakmodel, henry2019CWMS}, which includes both left and right semi-model categories.  Roughly, weak model categories are like Quillen model categories except that some properties fail to hold when one considers maps that are not from a cofibrant to a fibrant object.  However, as these are the only maps that are really homotopically meaningful in a model category, this weakening does not affect the homotopy theoretic properties of model categories in an essential way.  

Two of our main results, Theorem~\ref{th:ModelStr_Cofibrant_object} of Section~\ref{sect:cofibrant} and Theorem~\ref{th:ModelStr_fibrant_object} of Section~\ref{sect:fibrant}, are dual. They respectively assert that an accessible weak model structure on $\Ccal$ always left transfers to $\Coalg{Q}$ and right transfers to $\Alg{T}$.  If the base $\Ccal$ is core left saturated --- for instance, if $\Ccal$ is a left or right semi-model category --- the weak model structure on $\Coalg{Q}$ is always a right semi-model category, with the dual applying to $\Alg{T}$.  Finally, we identify, in Theorems'~\ref{thm:create} and ~\ref{thm:create2}, stronger conditions --- concerning lifting of cylinder and path objects respectively --- that ensure $\Coalg{Q}$ and $\Alg{T}$ are genuine model categories.  Using these results, we recover and extend the known results on model structures on algebraically cofibrant objects \cite{ching2014coalgebraic} and on algebraically fibrant objects \cite{Nikolaus2011Algebraic, Bourke2019Equipping}.

In Section~\ref{sect:strictify} we apply the above to obtain the following rigidification results for weak model categories.
\begin{itemize}
\item Each accessible weak model category is connected, via a zigzag of Quillen equivalences, to an accessible right semi-model category in which all objects are cofibrant.
\item Similarly, each accessible weak model category is connected, via a zigzag of Quillen equivalences, to an accessible left semi-model category in which all objects are fibrant.
\item Moreover, each \emph{combinatorial} weak model category is connected, via a zigzag of Quillen equivalences, to a combinatorial Quillen model category in which all objects are fibrant.
\end{itemize}

Finally, let us mention that a key tool in the proof of all of these results are new transfer theorems --- see Theorem~\ref{thm:main_transfert} and the dual Theorem~\ref{thm:main_dual_transfert} --- of independent interest.  These give necessary and sufficient conditions for the existence of a transferred weak model structure Quillen equivalent to the original one.

\section{Preliminaries on weak model categories}\label{sect:prelim}

In the present section we review the necessary background on weak model categories from \cite{henry2018weakmodel} and \cite{henry2019CWMS}.

\subsection{Weak model categories}

\begin{defn}
A \wfscat\ is a complete and cocomplete category equipped with two weak factorisation systems respectively called (cofibration/trivial fibration) and (trivial cofibration/fibration) such that each trivial fibration is a fibration, or equivalently, such that each trivial cofibration is a cofibration.
\end{defn}
\begin{notation}
In a \wfscat\ an important role is also played by the core and acyclic (co)fibrations, which we now recall.
\begin{itemize}
\item A \emph{core fibration} is a fibration between fibrant objects. Dually, a \emph{core cofibration} is a cofibration between cofibrant objects.
\item A cofibration is said to be \emph{acyclic} if it has the left lifting property against all core fibrations. In particular each trivial cofibration is acyclic.  The acyclic fibrations are defined dually and, as before, we note that each trivial fibration is acyclic.
\end{itemize}
\end{notation}

A relative cylinder object for a cofibration $A \hookrightarrow B$ is a factorisation:

  \[ B \coprod_A B \overset{i}{\hookrightarrow} I_A B \rightarrow B \]

of the codiagonal map, such that $i$ is a cofibration and the composite $B \hookrightarrow B \coprod_A B \hookrightarrow I_A B$ with the first pushout coprojection is an acyclic cofibration.

Dually, a relative path object for a fibration $Y \twoheadrightarrow X$ is a factorisation:

  \[ Y \rightarrow P_X Y \overset{p}{\twoheadrightarrow}  Y \times_X Y   \]

of the diagonal map such that $p$ is a fibration and the composite $P_X Y \twoheadrightarrow Y \times_X Y \twoheadrightarrow Y$ with the first pullback projection is an acyclic fibration.

\begin{defn} A (factorisation\footnote{The word ``factorisation'' refers to the fact that the notion of weak model category in \cite{henry2018weakmodel} is slightly more general. There, the existence of the weak factorisation systems is also weakened. Here we work in the framework of accessible and combinatorial weak model categories from \cite{henry2019CWMS} in which these are always present.}) weak model category is a \wfscat\ which satisfies the following conditions:

  \begin{itemize}

  \item \emph{Cylinder object axiom} --- Each cofibration $A \hookrightarrow B$ with $A$ cofibrant and $B$ fibrant admits a relative cylinder object.

  \item \emph{Path object axiom} --- Each fibration $Y \twoheadrightarrow X$ with $Y$ cofibrant and $X$ fibrant admits a relative path object.

\end{itemize}
\end{defn}

\begin{Remark}\label{rk:weak_cylinder} Because the cylinder and path object axioms are restricted to maps from cofibrant to fibrant objects, they only imply the existence of cylinder objects and path objects for bifibrant objects. Indeed, a cylinder object $X \coprod X \hookrightarrow IX \rightarrow X$ for a cofibrant object $X$ is a relative cylinder object for the cofibration $\emptyset \hookrightarrow X$.  Hence we need $X$ to be both fibrant and cofibrant in order to obtain a cylinder object from the cylinder object axiom as stated.

Although for a general core cofibration $A \hookrightarrow B$ we cannot construct a relative cylinder object, it was observed in \cite{henry2018weakmodel} (see Definition~2.1.12 and Remark~2.1.13) that we can construct a weakening of it called a ``weak relative cylinder object'', which is still sufficient to define the homotopy relation.  A weak relative cylinder object is a diagram of the form:

\[\begin{tikzcd}
  B \coprod_A B \ar[r,hook] \ar[d] & I_A B \ar[d] \\
 B \ar[r,hook,"\sim"] & D_A B  
\end{tikzcd}\]

in which the first of the two maps $B \hookrightarrow I_A B$ is an acyclic cofibration.  For instance $D_A B$ might be a fibrant replacement of $B$.   Note that if $B$ is fibrant then the acyclic cofibration $B \overset{\sim}{\hookrightarrow} D_A B$ has a section so that one recovers a ``strong'' cylinder object. 
\end{Remark}

The general theory of weak model categories can be found in \cite{henry2018weakmodel}, but we recommend the reader to start with the preliminary section of \cite{henry2019CWMS} for a shorter overview of the basic theory.  Below we highlight a few of basic facts.
\begin{itemize}

\item Using the path (or, equivalently, cylinder objects) one can define the \emph{homotopy relation} for maps from cofibrant to fibrant objects.  This is an equivalence relation having the usual properties.  In particular, one can construct the homotopy category $Ho(\Ccal)$ of $\Ccal$ by taking homotopy classes of maps between bifibrant objects.

\item One can show that $Ho(\Ccal)$ is equivalent to the localisation of the full subcategory $\Ccal_{c \cup f}$ of $\Ccal$ of objects that are either fibrant or cofibrant at the union of the classes of core acyclic cofibrations and core acyclic fibrations.  A weak equivalence is defined to a morphism that is inverted by $\Ccal_{c \cup f} \to Ho(\Ccal)$.  Of course, the notion of weak equivalence therefore makes sense only for arrows between objects that are either fibrant or cofibrant.

\end{itemize}

\begin{Properties}\label{properties:model}
The present paper will not be self-contained, relying on a number of results from \cite{henry2018weakmodel, henry2019CWMS}.  For the purposes of readability, we now list the various results about weak model categories that we need.  Each of these has a dual version.
\begin{enumerate}[(i)]
\item In a weak model category $\C$ a core cofibration is acyclic if and only if it is a weak equivalence.  This is Proposition 2.2.9(iv) of \cite{henry2018weakmodel}.\label{item:model1}
\item Since weak equivalences satisfy 2-out-of-3, it follows that the acyclic cofibrations satisfy 2-out-of-3 amongst core cofibrations.\label{item:model2}
\item Conversely, Proposition 2.3.3 of \cite{henry2018weakmodel} asserts that if a pre-model category $\C$ satisfies the cylinder object axiom and the acyclic cofibrations between bifibrant objects satisfy the right cancellation property --- given core cofibrations $j:A \hookrightarrow B$ and $i:B \hookrightarrow C$ such that $i$ and $i \circ j$ are acyclic, then $i$ is acyclic too.
--- then $\C$ is a weak model category.\label{item:model3}
\item The third item above is useful in recognising weak model structures, but the condition there on relative cylinder objects can in fact be further relaxed.  Indeed, to construct relative cylinder objects it is enough that: 
\begin{itemize}
\item given a core cofibration $A \hookrightarrow B$ with $B$ fibrant there exists a cofibration $B \coprod_A B \hookrightarrow I_A B$ such that both maps $B \hookrightarrow I_A B$ are acyclic cofibrations.
\end{itemize}
The point here is that there is a ``trick" to construct a weak relative cylinder object for $A \hookrightarrow B$ by considering the pushout $I_A B \coprod_B I_A B$, and from this one can deduce the cylinder object axiom  (See Lemma~2.3.6 and Remark~2.3.7 of \cite{henry2018weakmodel}). \label{item:model4} 
\end{enumerate}
\end{Properties}

\subsection{Quillen equivalences between weak model categories}

A \emph{Quillen adjunction} between pre-model categories is an adjunction:

\[ F: \Ccal \leftrightarrows \Acal : U \]

such that $F$ preserves cofibrations and $U$ preserves fibrations.  By adjointness, these conditions amount to asking that $F$ preserves cofibrations and trivial cofibrations, or equally that $U$ preserves fibrations and trivial fibrations.  Note that since $F$ then preserves core cofibrations and $U$ preserves core fibrations it follows also -- see Lemma 2.22 of \cite{henry2019CWMS} of -- that $F$ preserves acyclic cofibrations and that $U$ preserves acyclic fibrations.

Now, by Proposition 2.4.3 of \cite{henry2018weakmodel}, if the Quillen adjunction \[ F: \Ccal \leftrightarrows \Acal : U \] is between weak model categories
then it induces an adjunction \[ Ho(\Ccal) \leftrightarrows Ho(\Acal) \]
between homotopy categories. 

\begin{defn}
A Quillen adjunction as above is said to be a \emph{Quillen equivalence} just when the adjunction on homotopy categories is an equivalence of categories.
\end{defn}

\begin{Properties}\label{properties:adj}
The following are a few characterisations of Quillen equivalences that we will need later, given in Proposition 2.4.5 of \cite{henry2018weakmodel}.
\begin{enumerate}[(i)]
\item For $X \in \Ccal$ cofibrant and $Y \in \Acal$ fibrant a map $FX \to Y$ is a weak equivalence if and only if its adjoint $X \to UY$ is.\label{item:adj1}
\item At $Y \in \Acal$ fibrant the derived counit $FQUY \to FUY \to Y$ is a weak equivalence where $QUY \to UY$ is a cofibrant replacement and $F$ reflects weak equivalences between cofibrant objects.\label{item:adj2}
\item As in \ref{item:adj2} but asking only that $F$ reflects weak equivalences between bifibrant objects.\label{item:adj3}
\end{enumerate}
\end{Properties}

\subsection{Accessible weak factorisation systems and weak model categories}
A pre-model category (or a weak model category) is said to be accessible (resp. combinatorial) if its underlying category is locally presentable and its factorisation systems are accessible (resp. cofibrantly generated).  The notion of cofibrantly generated weak factorisation system is well known, but let us remind the reader what accessibility means in this context.

To this end, recall that a \emph{functorial factorisation} on a category $\C$ is a
functor 
\begin{equation}\label{eq:ffs}
\xymatrix{
\C^\mathbf 2 \ar[rr]^{(L,E,R)} && \C^{\mathbf 3}: & X \ar[r]^{f} & Y & \longmapsto & X \ar[r]^{Lf} & Ef \ar[r]^{Rf} & Y
}
\end{equation} 
from the category of arrows to that of composable pairs which is a section of the composition functor $\C^\mathbf 3 \to \C^\mathbf 2$.  

We say that a weak factorisation system admits a functorial factorisation if such a functor exists for which the morphisms $Lf$ and $Rf$ always belong to the left and right class of the given weak factorisation system -- furthermore, if $\C$ is locally presentable and the functorial factorisation $(L,E,R):\C^{\atwo} \to \C^{\athree}$ can be chosen so as to be an \emph{accessible functor}, then we say that the weak factorisation system is accessible.

\subsection{Saturation conditions}
A pre-model category is said to be
\begin{itemize}
\item \emph{left saturated} if the class of trivial cofibrations and acyclic cofibrations coincide.
\item \emph{core left saturated} if the classes of trivial cofibrations and acyclic cofibrations between cofibrant objects coincide.
\item \emph{right saturated} or \emph{core right saturated} if the dual conditions hold.
\end{itemize}

In Section 4 of \cite{henry2019CWMS} it is shown that any accessible pre-model category can be made left and right saturated without changing its classes of core cofibrations and core fibrations, so that one can always assume these conditions are satisfied without affecting the homotopy theoretic information.

\begin{Properties}\label{properties:saturated}
\begin{enumerate}[(i)]
\item If every object is fibrant then the class of acyclic and trivial cofibrations coincide --- that is, the pre-model category is left saturated.\label{item:sat1}
\item More generally, each acyclic cofibration with fibrant target is a trivial cofibration -- this is established in Lemma 2.17 of \cite{henry2019CWMS}.\label{item:sat2}

\end{enumerate}
\end{Properties}

\subsection{Left and right semi-model categories as weak model categories}

The original definitions of left and right semi-model category \cite{barwick2010left, spitzweck2001operads} do not require the existence of two fully formed weak factorisation systems.  Since we are only interested in the case where such factorisations do exist, we give the following simplified definition.

\begin{Definition}
A pre-model category $\C$ is a left semi-model category if it can be endowed with a class of arrows $W$ (called weak equivalences) such that:
\begin{itemize}
\item $W$ contains the isomorphisms and satisfies 2-out-of-6.
\item A core cofibration is a trivial cofibration if and only if it is a weak equivalence.
\item A fibration is a trivial fibration if and only if it is a weak equivalence.
\end{itemize}
\end{Definition}

\begin{Remark}\label{remark:data}
In the above formulation, the class of weak equivalences is not specified as part of the data, whilst in \cite{barwick2010left, spitzweck2001operads} it is specified.  However, the above axioms ensure that the class of weak equivalences above is uniquely determined --- it is the 2-out-of-3 closure of the trivial fibrations and core trivial cofibrations --- from which it immediately follows that the two formulations are equivalent.

\end{Remark}

\begin{Properties}\label{properties:semivsweak}
\begin{enumerate}[(i)]
\item A left semi-model category in which each object is cofibrant is a Quillen model category.\label{item:semi1}
\item Each left semi-model category has an underlying weak model category.  Furthermore, Remark~\ref{remark:data} shows that being a left semi-model category is a property of a factorisation weak model category.
\item In fact, Section 3 of \cite{henry2019CWMS} shows that left semi-model categories can be characterised as the factorisation weak model categories which are right saturated, core left saturated and for which each cofibrant object $X$ admits a cylinder object $X \coprod X \hookrightarrow IX \overset{\sim}{\rightarrow} X$.\label{item:semi2}
\end{enumerate}
\end{Properties}

\section{Preliminaries on algebraic weak factorisation systems}\label{sect:algebraic}
We now describe the necessary background on algebraic weak factorisation systems --- see, for instance \cite{Bourke2016Accessible, Garner2011Understanding, Grandis2006Natural, Riehl2011Algebraic}.  This background material is mostly a summary of material in Section 2 of \cite{Bourke2016Accessible}, to which we refer the reader for a more detailed account.  

\subsection{Algebraic weak factorisation systems}

A functorial factorisation $(L,E,R):\C^{\mathbf 2} \to \C^{\mathbf 3}$ as in ~\eqref{eq:ffs} induces endofunctors $L,R:\C^{\mathbf 2} \to \C^{\mathbf 2}$ of the arrow category and natural transformations $\epsilon: L \to 1$ and $\eta:1 \to R$ with components as below.
\begin{equation*}
\xymatrix{
A \ar[r]^1 \ar[d]_{Lf} & A \ar[d]^{f} \\
Ef \ar[r]^{Rf} & B
} \hspace{2cm}
\xymatrix{
A \ar[r]^{Lf} \ar[d]_{f} & Ef \ar[d]^{Rf} \\
B \ar[r]^{1} & B\rlap{ .}
}
\end{equation*}
An algebraic weak factorisation system consists of a comonad $(L,\epsilon,\Delta)$ and monad $(R,\eta,\mu)$ extending the above, together with a distributive law relating $L$ and $R$ whose details will not be required.

Associated to the algebraic weak factorisation system $(L,R)$ we have the categories $\Coalg{L}$ and $\Alg{R}$ of coalgebras for the comonad $L$ and algebras for the monad $R$ -- these are thought of as the left and right classes of the \awfs.  We denote a coalgebra by $\alg f = (f, s) \colon A \to B$ where $f:A \to B$ is the underlying morphism in $\C$ and $s:f \to Lf$ the structure map for the coalgebra; likewise, we write $\alg g = (g, r) \colon A \to B$ for an $R$-algebra where $r:Rg \to g$ is the structure map.

\subsection{Double categorical aspects}\label{sect:double}
A morphism in $\Coalg{L}$ is a coalgebra map -- that is, a commutative square as below left which is preserves the $L$-coalgebra structure.
\begin{equation*}
\xymatrix{
A \ar[d]_{\alg f = (f,s)} \ar[r]^{a} & C \ar[d]^{\alg g = (g,r)} \\
C \ar[r]^{b} & D}
\end{equation*}

Composition in $\Coalg{L}$ witnesses that these squares can be composed, across the page.  There is also a vertical composition operation -- given $L$-coalgebras $\alg f:A \to B$ and $\alg g:B \to C$ the composite morphism $g \circ f$ in $\C$ admits an extension to a coalgebra $\alg g \cdot \alg f:A \to C$.  This composition is associative and unital when the identity $1_A:A \to A$ is equipped with its unique coalgebra structure $\alg{1_A}:A \to A$.  This \emph{vertical composition} respects $L$-coalgebra homomorphisms and, in total, makes $\Coalg{L}$ into a \emph{double category} whose objects and horizontal morphisms are as in $\C$, whose vertical morphisms are the $L$-coalgebras and whose squares are the $L$-coalgebra morphisms.

\subsection{Algebraic weak factorisation systems versus weak factorisation systems}
If $f$ admits the structure of an $L$-coalgebra and $g$ admits the structure of an $R$-algebra, then $f$ has the left lifting property with respect to $g$.  It follows that each algebraic weak factorisation has an \emph{underlying weak factorisation system}, whose left and right classes consist of the retract closures of the $L$-coalgebras and $R$-algebras respectively.  

In the case that a weak factorisation system underlies an algebraic weak factorisation system, we call the algebraic weak factorisation system an \emph{algebraic realisation} of the weak factorisation system.

\subsection{Accessibility of algebraic weak factorisation systems}
In the case that $\C$ is a locally presentable category the algebraic weak factorisation is said to be \emph{accessible} if its functorial factorisation $(L,E,R)$ is accessible.  In fact, each accessible weak factorisation system admits an accessible algebraic realisation --- see Proposition 3.5 of \cite{garner2020lifting}.

\subsection{Algebraic cofibrant and fibrant replacement}

The counit of the comonad $L$ has component $\epsilon_f = (1_A,Rf):Lf \to f$ -- in particular, has identity domain component $1_A$.  It follows that the comultiplication $\Delta_f$ has the same property whence the comonad restricts to a comonad on each coslice $A/\C$.  If $\C$ has an initial object $\emptyset$,  we thereby obtain a comonad $Q$ on $\emptyset /\C \cong \C$ which is called the \emph{cofibrant replacement comonad}.  Dually, if $\C$ has a terminal object $1$ then the monad $R$ restricts to a \emph{fibrant replacement monad} $T$ on the slice $\C / 1 \cong \C$.  

\subsection{Cofibrant generation}
A set $J$ of morphisms in a locally presentable category $\C$ cofibrantly (or freely) generates an algebraic weak factorisation system $(L,R)$.  An $R$-algebra consists of a morphism $g:X \to Y$ together with a lifting function $\phi$, which assigns to each commutative square as on the outside left below
\[
\begin{tikzcd}
A \ar[d,"j \in J"swap] \ar[r,"a"] & X \ar[d,"g"] \\
B \ar[r,"b"below] \ar[ur,"{\phi(j,a,b)}"description,dotted] & Y \\  
\end{tikzcd}
\hspace{2cm}
\begin{tikzcd}
A \ar[d,"j \in J"swap] \ar[r,"a"] & X \\
B  \ar[ur,"{\boldsymbol{x}(j,a)}"swap,dotted] \\  
\end{tikzcd}
 \] 
a chosen diagonal filler, as depicted.  The morphisms of $\Alg{R}$ are commutative squares which preserve the chosen fillers.  (Although not required in the present paper, we note that one can replace $J$ in the above by a small category or even double category of morphisms, and each accessible algebraic weak factorisation system is cofibrantly generated in this stronger sense.)

The algebraically fibrant objects -- that is, the $T$-algebras -- are algebraically $J$-fibrant objects.  These consist of an object $X \in \C$ together with a lifting function $\boldsymbol{x}$, which assigns to the solid part of each diagram as on the right above
 a filler.  Again morphisms of $\Alg{T}$ preserve the fillers.   Of course, the category of algebraically $J$-fibrant objects makes sense for any set $J$ and category $\C$, regardless of whether the cofibrantly generated \awfs exists, and we denote it by $\JFib$.  In particular when $(L,R)$ does exist, we have $\Alg{T} \cong \JFib$.

\section{The equivalence transfer theorem}\label{sect:transfer}

\begin{thm} \label{thm:main_transfert} Let $\Ccal$ and $\Acal$ be two \wfscats\ together with a Quillen adjunction:

\[ F: \Ccal \leftrightarrows \Acal : U \]

and suppose that $\Acal$ is a weak model category. Then $\Ccal$ is a weak model category such that the above adjunction is a Quillen equivalence if and only if the following two conditions are satisfied:

\begin{enumerate}[(i)]

\item Given a cofibrant object $X \in \Ccal$, a fibrant object $Y \in \Acal$ and a morphism:

\[ F X \overset{i}{\rightarrow} Y \]

there exists a cofibration $j:X \hookrightarrow Z$ in $\Ccal$ and a factorisation:

\[
\begin{tikzcd}
  F X \ar[r,"i"] \ar[d,hook,"F j" swap] & Y \\
 F Z \ar[ur,"w","\sim" swap]
\end{tikzcd}
\]

such that $w$ is a weak equivalence.

\item If $i$ is a core cofibration in $\Ccal$ and $Fi$ is an acyclic cofibration, then $i$ is an acyclic cofibration.

\end{enumerate}

\end{thm}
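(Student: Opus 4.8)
The statement is an equivalence, so I would prove the two implications separately, expecting all of the real work to sit in the ``if'' direction. For \emph{necessity}, assume $\Ccal$ is a weak model category and the adjunction a Quillen equivalence. Condition~(ii) is then immediate: if $i\colon A\hookrightarrow B$ is a core cofibration with $Fi$ an acyclic cofibration, then $Fi$ is a weak equivalence between the cofibrant objects $FA,FB$, so $i$ is a weak equivalence because a Quillen equivalence reflects weak equivalences between cofibrant objects (Properties~\ref{properties:adj}(ii)); being a core cofibration, $i$ is acyclic by Properties~\ref{properties:model}(i). For condition~(i), given $i\colon FX\to Y$ I would pass to the transpose $i^\flat\colon X\to UY$, observe that $UY$ is fibrant, and factor it as a cofibration $j\colon X\hookrightarrow Z$ followed by a trivial fibration $p\colon Z\to UY$. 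Transposing $p$ gives $w=p^\sharp\colon FZ\to Y$ with $i=w\circ Fj$; since a trivial fibration is a weak equivalence, so is $w$ by Properties~\ref{properties:adj}(i).

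For \emph{sufficiency}, assume conditions~(i) and~(ii); I would first make $\Ccal$ a weak model category and only afterwards treat the Quillen equivalence. The cylinder object axiom I would obtain in the weakened form of Properties~\ref{properties:model}(iv): given a core cofibration $A\hookrightarrow B$ with $B$ fibrant, choose a fibrant replacement $\ell\colon FB\overset{\sim}{\hookrightarrow}\widehat{FB}$ in $\Acal$ and apply condition~(i) to the cofibrant object $X=B\coprod_A B$, the fibrant object $\widehat{FB}$, and the map $(\ell,\ell)\colon F(B\coprod_A B)\cong FB\coprod_{FA}FB\to\widehat{FB}$ ($F$ preserving the pushout). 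This produces a cofibration $j\colon B\coprod_A B\hookrightarrow Z=:I_AB$ and a weak equivalence $w\colon FZ\to\widehat{FB}$ with $(\ell,\ell)=w\circ Fj$. Writing $\lambda_0,\lambda_1\colon B\to Z$ for the two legs, one has $w\circ F\lambda_k=\ell$, so each $F\lambda_k$ is a weak equivalence by two-out-of-three and thus an acyclic cofibration by Properties~\ref{properties:model}(i); condition~(ii) then forces each $\lambda_k$ to be an acyclic cofibration, as required. For the right cancellation property of Properties~\ref{properties:model}(iii), I would take core cofibrations $j\colon A\hookrightarrow B$, $i\colon B\hookrightarrow C$ between bifibrant objects with $i$ and $i\circ j$ acyclic; as $F$ preserves acyclic cofibrations, $Fi$ and $Fi\circ Fj$ are acyclic, so $Fj$ is acyclic by two-out-of-three among core cofibrations in $\Acal$ (Properties~\ref{properties:model}(ii)), and condition~(ii) makes $j$ acyclic. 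Combining Properties~\ref{properties:model}(iv) and~(iii) shows $\Ccal$ is a weak model category.

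It then remains to verify the Quillen equivalence via Properties~\ref{properties:adj}(iii), namely that $F$ reflects weak equivalences between bifibrant objects and that the derived counit is a weak equivalence at each fibrant $Y$. The reflection statement I would handle as follows: a Ken Brown argument (using that $F$ preserves acyclic cofibrations) shows $F$ preserves weak equivalences between cofibrant objects, so given $f\colon X\to X'$ between bifibrant objects with $Ff$ a weak equivalence, I factor $f$ as a cofibration $k$ followed by a trivial fibration $p$ with bifibrant intermediate object; then $p$, being a weak equivalence between cofibrant objects, has $Fp$ a weak equivalence, whence $Fk$ is a weak equivalence and hence acyclic, so condition~(ii) makes $k$ acyclic and $f=p\circ k$ a weak equivalence.

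The derived counit is the crux, and the step I expect to be genuinely hard. Writing $q\colon QUY\to UY$ for a cofibrant replacement, the natural attempt is to apply condition~(i) to the derived counit $d\colon F(QUY)\to Y$ itself, producing a cofibration $j\colon QUY\hookrightarrow Z$ and a weak equivalence $w\colon FZ\to Y$ with $d=w\circ Fj$; lifting $q$ (an acyclic fibration) against $j$ yields a retraction of $j$ together with the relation $q=w^\flat\circ j$ on transposes. The obstacle is that this formal data does not certify that $j$ (equivalently $Fj$, equivalently $d$) is a weak equivalence: the transpose $w^\flat$ is not visibly one, and the available two-out-of-three and retract manipulations merely reduce the claim to an instance of itself. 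Resolving this is the heart of the theorem; I would expect it to require exploiting the weak model structure just constructed on $\Ccal$ --- for instance a homotopy-level comparison using the cylinder objects above --- or a more judicious choice of the map to which condition~(i) is applied, so as to force the comparison cofibration to be acyclic, after which $Fj$, and hence $d$, is a weak equivalence.
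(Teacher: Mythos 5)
Your overall architecture coincides with the paper's: the necessity direction is handled exactly as you do it, and for sufficiency the paper likewise verifies the hypotheses of Properties~\ref{properties:model}\ref{item:model3} and \ref{item:model4} to obtain the weak model structure on $\Ccal$, then checks Properties~\ref{properties:adj}\ref{item:adj3}. Your cylinder-object construction and your proof that $F$ reflects weak equivalences between bifibrant objects are essentially the paper's arguments. (One small slip: the cancellation property actually needed is the \emph{right} cancellation one --- $j$ and $i\circ j$ acyclic imply $i$ acyclic --- rather than the left cancellation you verify; your method proves it just as readily, since $Fj$ and $F(i\circ j)$ acyclic give $Fi$ acyclic by 2-out-of-3 among core cofibrations in $\Acal$, and condition~(ii) then applies to $i$.)

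The genuine gap is the derived counit step, and it is smaller than you believe: you already hold every ingredient and only fail to assemble them. Applying condition~(i) to $d\colon FQUY\to Y$ gives $d=w\circ Fj$ with $j\colon QUY\hookrightarrow Z$ a cofibration and $w$ a weak equivalence, and lifting the trivial fibration $q\colon QUY\to UY$ against $j$ gives $t\colon Z\to QUY$ with $t\circ j=1$ and $q\circ t=w^{\flat}$. Transposing the latter identity back across the adjunction yields $d\circ Ft=(q\circ t)^{\sharp}=(w^{\flat})^{\sharp}=w$, which together with $w\circ Fj=d$ and $Ft\circ Fj=1$ exhibits $d$ as a retract of $w$ in the arrow category, via $(Fj,1_Y)$ and $(Ft,1_Y)$. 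Since weak equivalences (between objects that are cofibrant or fibrant) are closed under retracts, $d$ is a weak equivalence; there is no circularity, and no homotopy-level comparison or cleverer choice of input to condition~(i) is required. Your assertion that the ``retract manipulations merely reduce the claim to an instance of itself'' is precisely where the proof goes astray --- the retract is taken over the \emph{identity} of $Y$, so it transfers the already-known acyclicity of $w$ to $d$ rather than restating the problem.
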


\begin{Remark} Our main application of this theorem is when $\Acal$ is an accessible weak model category and $\Ccal$ is the category of algebraically cofibrant objects therein. The dual version of the theorem will be used for the category of algebraically fibrant objects.
\end{Remark}

We first prove that if $\Ccal$ is a weak model category and $(F,U)$ is a Quillen equivalence then these conditions are satisfied. The other implication will occupy us for the remainder of the section.

\begin{proof}
By Properties~\ref{properties:adj}\ref{item:adj2} the left Quillen equivalence $F$ detects weak equivalences between cofibrant objects.  Therefore if $i$ is a core cofibration such that $Fi$ is an acyclic cofibration (and in particular a weak equivalence) it follows that $i$ is a weak equivalence, and so an acyclic cofibration as in Properties~\ref{properties:model}\ref{item:model2}.  This proves the second point immediately.

Consider now a map $F X \rightarrow Y$ with $X$ cofibrant and $Y$ fibrant. It corresponds to a morphism $ X \rightarrow U Y$, which we can factor as a cofibration followed by a trivial fibration:

 \[ X \hookrightarrow Z \overset{\approx}{\twoheadrightarrow} U Y \]

Since the trivial fibration $Z \overset{\approx}{\twoheadrightarrow} UY$ is an acyclic fibration between fibrant objects and $\Ccal$ is a weak model category, it is a weak equivalence by the dual of Properties~\ref{properties:model}\ref{item:model1}.  As $(F,U)$ is a Quillen equivalence, $Z$ is cofibrant and $Y$ is fibrant it follows from Properties~\ref{properties:adj}\ref{item:adj1} that its adjoint map $ F Z \rightarrow Y$ is a weak equivalence in $\Acal$, which gives the desired factorisation. 
\end{proof}

\begin{Assumptions}
For the remainder of the section, we assume that we have a Quillen adjunction $F: \Ccal \leftrightarrows \Acal : U$ satisfying the two conditions of the theorem, and we will show that $\Ccal$ is a weak model structure and that $(F,U)$ is a Quillen equivalence.
\end{Assumptions}

\begin{Lemma} Under the assumption above, $\Ccal$ is also a weak model category. \end{Lemma}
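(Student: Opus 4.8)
The plan is to verify the two hypotheses of the recognition criterion Properties~\ref{properties:model}\ref{item:model3}: that $\Ccal$ satisfies the cylinder object axiom, and that the acyclic cofibrations between bifibrant objects of $\Ccal$ satisfy right cancellation. Since $\Ccal$ is already a \wfscat, this criterion upgrades it at once to a weak model category --- yielding both the cylinder and the path object axioms --- so that no separate argument for path objects is needed.

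Right cancellation is the easy half. Given core cofibrations $j \colon A \hookrightarrow B$ and $i \colon B \hookrightarrow C$ between cofibrant objects with $j$ and $i \circ j$ acyclic, I would apply $F$. As $F$ preserves cofibrations and acyclic cofibrations, $Fj$ and $F(i\circ j) = Fi \circ Fj$ are acyclic cofibrations between cofibrant objects of $\Acal$; since $\Acal$ is a weak model category its acyclic cofibrations satisfy $2$-out-of-$3$ amongst core cofibrations by Properties~\ref{properties:model}\ref{item:model2}, so $Fi$ is acyclic. The second condition of Theorem~\ref{thm:main_transfert} then forces $i$ to be acyclic.

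For the cylinder object axiom I would use the relaxed form Properties~\ref{properties:model}\ref{item:model4}: given a core cofibration $A \hookrightarrow B$ with $B$ fibrant it suffices to produce a cofibration $B \coprod_A B \hookrightarrow I_A B$ both of whose composites with the coprojections $B \to B \coprod_A B$ are acyclic cofibrations. Write $X = B \coprod_A B$, which is cofibrant since $A$ and $B$ are. In $\Acal$ I first choose a fibrant replacement $FB \overset{\sim}{\hookrightarrow} \hat B$ by a trivial cofibration with $\hat B$ bifibrant; as an acyclic core cofibration this is a weak equivalence by Properties~\ref{properties:model}\ref{item:model1}. Composing the fold $FX = FB \coprod_{FA} FB \to FB$ with $FB \overset{\sim}{\hookrightarrow} \hat B$ gives a morphism $FX \to \hat B$ with fibrant codomain, and I apply the first condition of Theorem~\ref{thm:main_transfert} to it. This produces a cofibration $X \hookrightarrow I_A B$ in $\Ccal$, with $I_A B$ cofibrant, together with a commuting factorisation $FX \to F(I_A B) \overset{w}{\to} \hat B$ through a weak equivalence $w$, which is the required cofibration $B \coprod_A B \hookrightarrow I_A B$.

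It remains to check that each composite $k \colon B \to B \coprod_A B \hookrightarrow I_A B$ with a coprojection is an acyclic cofibration. Each coprojection is a pushout of $A \hookrightarrow B$, hence a cofibration, so $k$ is a core cofibration between cofibrant objects. Precomposing the factorisation with the image of the coprojection, and using that each coprojection is a section of the fold map, one computes $w \circ Fk = (FB \overset{\sim}{\hookrightarrow} \hat B)$; as $w$ and this composite are weak equivalences between cofibrant objects, $2$-out-of-$3$ shows $Fk$ is a weak equivalence, hence an acyclic cofibration by Properties~\ref{properties:model}\ref{item:model1}. The second condition of Theorem~\ref{thm:main_transfert} then makes $k$ acyclic in $\Ccal$, completing the verification. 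The main obstacle is precisely this cylinder construction: the first condition of the theorem demands a fibrant codomain, and the device that makes everything work is to route the fold $FX \to FB$ through the fibrant replacement $\hat B$, so that after pulling back along $X \hookrightarrow I_A B$ both cylinder inclusions can be recognised as acyclic via $2$-out-of-$3$ together with the reflection condition.
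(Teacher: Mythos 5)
Your proposal is correct and follows essentially the same route as the paper: right cancellation is obtained by applying $F$ and invoking the second condition of Theorem~\ref{thm:main_transfert}, and the relaxed cylinder condition of Properties~\ref{properties:model}\ref{item:model4} is verified by applying the first condition of the theorem to the composite of the fold map with a fibrant replacement of $FB$, then using $2$-out-of-$3$ and the reflection condition to see that both maps $B \hookrightarrow I_A B$ are acyclic. Your write-up merely spells out a few details the paper leaves implicit (cofibrancy of $B \coprod_A B$ and the computation $w \circ Fk = FB \overset{\sim}{\hookrightarrow} \hat B$).
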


\begin{proof}
Firstly, suppose that in $\Ccal$ one has core cofibrations $A \overset{j}{\hookrightarrow} B \overset{i}{\hookrightarrow} C $ such that $j$ and $i \circ j$ are acyclic.  Then since $F$ is left Quillen both $Fj$ and $F(i \circ j)$ are acyclic cofibrations so that, since $\Acal$ is a weak model category, $Fi$ is an acyclic cofibration too.  Therefore, by the second condition of Theorem~\ref{thm:main_transfert}, $i$ is acyclic too.

In order to show that $\Ccal$ is a weak model category, it suffices by Properties~\ref{properties:model}\ref{item:model4} to show that given a core cofibration $A \hookrightarrow B$, one can construct a cofibration $B \coprod_A B \hookrightarrow I_A B$ such that both maps $B \hookrightarrow I_A B$ are acyclic.

To this end, consider the codiagonal map $B \coprod_A B \rightarrow B$, and let $FB \overset{\approx}{\hookrightarrow} Y \twoheadrightarrow 1$ be a fibrant replacement of $FB$ in $\Acal$.  Applying the first condition of Theorem~\ref{thm:main_transfert} to the map $F(B \coprod_A B) \rightarrow Y$ we obtain a cofibration $B \coprod_A B \hookrightarrow I_A B$ in $\Ccal$ and a weak equivalence $F(I_A B) \rightarrow Y$ in $\Acal$ factoring the map above. In particular, by $2$-out-of-$3$ the maps $F(B) \rightarrow F(I_A B)$ are weak equivalences, hence acyclic cofibrations, so that by the second condition of Theorem~\ref{thm:main_transfert} the maps $B \hookrightarrow I_A B$ are acyclic cofibrations.

\end{proof}

\begin{Lemma} Still under the same assumptions, $(F,U)$ is a Quillen equivalence between $\Ccal$ and $\Acal$. \end{Lemma}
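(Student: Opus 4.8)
The plan is to verify the two clauses of the characterisation of Quillen equivalences in Properties~\ref{properties:adj}\ref{item:adj3}: that $F$ reflects weak equivalences between bifibrant objects, and that at each fibrant $Y$ the derived counit $FQUY \to Y$ is a weak equivalence. Before either, I would record the bridging observation that, for a \emph{core cofibration} $j$ in $\Ccal$, the map $j$ is acyclic if and only if $Fj$ is a weak equivalence: if $j$ is acyclic then $Fj$ is an acyclic cofibration and hence a weak equivalence by Properties~\ref{properties:model}\ref{item:model1} in $\Acal$, while conversely a weak equivalence $Fj$ is acyclic by the same result and then $j$ is acyclic by the second hypothesis of Theorem~\ref{thm:main_transfert}. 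I refer to this equivalence as $(\star)$.

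The step I expect to be the main obstacle is the derived counit, because the first hypothesis of Theorem~\ref{thm:main_transfert} does not hand us a weak equivalence directly. So I would fix a fibrant $Y$ and a cofibrant replacement $q\colon QUY \twoheadrightarrow UY$, write $d\colon FQUY \to Y$ for the transpose of $q$ (the derived counit), and apply the first hypothesis to $d$. This yields only a cofibration $j\colon QUY \hookrightarrow Z$ and a weak equivalence $w\colon FZ \to Y$ with $d = w \circ Fj$; transposing gives $q = \hat w \circ j$, where $\hat w\colon Z \to UY$ is the transpose of $w$. The crucial manoeuvre is then to exploit that $q$ is a \emph{trivial fibration}: lifting the identity $1_{QUY}$ in the commuting square with top $1_{QUY}$, left $j$, right $q$ and bottom $\hat w$ produces a retraction $\ell\colon Z \to QUY$ with $\ell \circ j = 1_{QUY}$ and $q \circ \ell = \hat w$, the transpose of the second identity being $w = d \circ F\ell$. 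Passing to $Ho(\Acal)$, where all the objects involved lie in $\Acal_{c\cup f}$ so that weak equivalences are exactly the maps rendered invertible, the relations $[F\ell][Fj] = 1$ and $[w] = [d][F\ell] = [w][Fj][F\ell]$, together with the invertibility of $[w]$, force $[Fj][F\ell] = 1$; hence $[Fj]$ is invertible, $Fj$ is a weak equivalence, and therefore so is $d = w\circ Fj$.

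For the reflection clause I would take $f\colon X \to X'$ between bifibrant objects with $Ff$ a weak equivalence and factor it, using that $\Ccal$ is now known to be a weak model category, as $X \overset{i}{\hookrightarrow} E \overset{p}{\twoheadrightarrow} X'$ with $i$ a cofibration and $p$ a trivial fibration. Then $E$ is again bifibrant, $i$ is a core cofibration, and $p$ is a core acyclic fibration, hence a weak equivalence by the dual of Properties~\ref{properties:model}\ref{item:model1}. Since $F$ is left Quillen it preserves weak equivalences between cofibrant objects (the analogue of Ken Brown's lemma, available for weak model categories), so $Fp$ is a weak equivalence; combined with $Ff = Fp \circ Fi$ this makes $Fi$ a weak equivalence, whence $i$ is acyclic by $(\star)$ and so a weak equivalence by Properties~\ref{properties:model}\ref{item:model1}. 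Two-out-of-three then gives that $f$ is a weak equivalence. Having checked both clauses of Properties~\ref{properties:adj}\ref{item:adj3}, I would conclude that $(F,U)$ is a Quillen equivalence. The only external input is the preservation of weak equivalences between cofibrant objects applied to the single map $p$; everything else follows from the two hypotheses together with the retraction trick above.
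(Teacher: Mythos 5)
Your proof is correct and follows essentially the same route as the paper: both apply the first hypothesis to the transpose of the cofibrant replacement, produce the retraction $\ell$ (the paper's $t$) by lifting against the trivial fibration, and then verify reflection of weak equivalences between bifibrant objects by factoring and invoking the second hypothesis. The only cosmetic difference is that the paper packages the counit step as ``a retract of a weak equivalence is a weak equivalence'' while you unfold the same data into an explicit invertibility computation in $Ho(\Acal)$.
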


\begin{proof}
Let $Y$ be a fibrant object in $\Acal$ and $u:X \overset{\approx}{\twoheadrightarrow} U Y$ be a cofibrant replacement of $UY$ in $\Ccal$.  The derived counit of the adjunction $F \dashv U$ at $Y$ is then given by the adjoint map $\overline{u}:FX \to Y$ -- we first show this to be a weak equivalence in $\Acal$.  Indeed using the first condition of Theorem~\ref{thm:main_transfert} one can factor it as:

\[ FX \overset{F i}{ \hookrightarrow } F Z \underset{\sim}{\overset{w}{\rightarrow}} Y. \]

By adjointness we obtain a commutative diagram in $\Ccal$ as on the outside of the square below

\[\begin{tikzcd}
X \ar[d,hook,"i"] \ar[r,equal] & X \ar[d,two heads,"\overline{u}"] \\
Z \ar[ur,dotted,"t"] \ar[r,"w^*"] & U Y
\end{tikzcd}
\]

where $w^*$ denotes the adjoint transpose of $w$.  Since $i$ is a cofibration and $u$ a trivial fibration we obtain a diagonal filler $t$ as above.  Thus $u$ fits into a retract diagram as below left

\[
\begin{tikzcd}
  X \ar[d,"u"] \ar[r, "i"] & Z \ar[d,"w^*"] \ar[r, "t"] & X \ar[d,"u"] \\
U Y \ar[r,equal] & U Y \ar[r,equal] & U Y \\
\end{tikzcd}
\hspace{1.5cm}
\begin{tikzcd}
 F X \ar[d,"\overline{u}"] \ar[r, "Fi"] & FZ \ar[d,"w"] \ar[r, "F t"] & FX \ar[d,"\overline{u}"] \\
Y \ar[r,equal] & Y \ar[r,equal] & Y \\
\end{tikzcd}
\]

so that, by adjointness, we obtain a retract diagram as above right. Then $\overline{u}$ is a retract of the weak equivalence $w$, and so a weak equivalence itself.

To conclude the proof it suffices, by Properties~\ref{properties:adj}\ref{item:adj3}, to show that $F$ detects weak equivalences between bifibrant objects. Thus let $f: X \rightarrow Y$ be a map between bifibrant objects such that $F f$ is a weak equivalence, and consider a (cofibration/trivial fibration)-factorisation
\[ X \overset{i}{ \hookrightarrow } Z \underset{\sim}{\overset{w}{\rightarrow}} Y \]
of $f$. Then $w$ a weak equivalence between cofibrant objects and so is sent by $F$ to a weak equivalence, so that $Fi$ is also a weak equivalence by $2$-out-of-$3$, and hence is an acyclic cofibration. It therefore follows by our assumption on $F$ that $i$ is an acyclic cofibration between cofibrant objects and so a weak equivalence. Hence $f = w \circ i$ is a weak equivalence too.

\end{proof}

\begin{Remark}

In practice, one typically starts with a weak model structure on $\Acal$, and an adjunction $F:\Ccal \leftrightarrows \Acal:U$ which satisfies the first condition of Theorem~\ref{thm:main_transfert} but with ``a cofibration in $\Acal$'' replaced by ``a map $i$ in $\Acal$ such that $Fi$ is a cofibration''.  Given this, one then forms the left transferred weak factorisation systems on $\Acal$ --- that is, such that the (trivial) cofibrations in $\Acal$ are the preimage under $F$ of the (trivial) cofibrations in $\Ccal$. Unfortunately, in order to apply Theorem~\ref{thm:main_transfert} one requires that a map in $\Acal$ is an acyclic cofibration if and only if $Fi$ is an acyclic cofibration, and this does not appear to be automatic in the situation above. 

To avoid this problem there are two solutions.  Firstly, one can assume that $\Acal$ is core left saturated --- that is, all core acyclic cofibration in $\Acal$ are trivial cofibration --- or replace $\Acal$ by its core left saturation  in which this property holds --- see Section 4 of \cite{henry2019CWMS}. Alternatively, one can use the following modification of Theorem~\ref{thm:main_transfert} in which Condition 2 is restricted to trivial cofibrations, but Condition 1 is strengthened with the assumption that $FZ$ is fibrant. In the case of the weak model structure on algebraically cofibrant objects, this is ensured by the fact that $F Z \rightarrow Y $ is constructed as a trivial fibration.
\end{Remark}

\begin{Prop} \label{prop:main_transfert_Simplified}

Let $\Ccal$ and $\Acal$ be two \wfscats\ with a Quillen adjunction:

\[ F: \Ccal \leftrightarrows \Acal : U \]

Suppose that $\Acal$ is a weak model category and that the adjunction satisfies the following two conditions:

\begin{enumerate}[(i)]

\item Given a cofibrant object $X \in \Ccal$, a fibrant object $Y \in \Acal$ and a morphism:

\[ F X \overset{i}{\rightarrow} Y \]

there exists a cofibration $j:X \hookrightarrow Z$ in $\Ccal$ and a factorisation:

\[
\begin{tikzcd}
  F X \ar[r,"i"] \ar[d,hook,"F j" swap] & Y \\
 F Z \ar[ur,"w","\sim" swap]
\end{tikzcd}
\]

such that $w$ is a weak equivalence and $F Z$ is fibrant in $\Acal$.\label{item:simplified1}

\item If $i$ is a core cofibration in $\Ccal$ and $Fi$ is a trivial cofibration, then $i$ is an acyclic cofibration.\label{item:simplified2}

\end{enumerate}
  
Then $\Ccal$ is a weak model category and $(F,U)$ is a Quillen equivalence.

\end{Prop}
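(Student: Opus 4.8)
The plan is to deduce the proposition from the equivalence transfer theorem, Theorem~\ref{thm:main_transfert}, by checking that the two hypotheses assumed here imply the two conditions of that theorem. Condition~(i) of Theorem~\ref{thm:main_transfert} is immediate: it is exactly condition~\ref{item:simplified1} with the extra clause ``$FZ$ fibrant'' forgotten. The whole content therefore lies in upgrading the weakened condition~\ref{item:simplified2} --- which only reflects \emph{trivial} cofibrations --- to condition~(ii) of Theorem~\ref{thm:main_transfert}, which must reflect all \emph{acyclic} cofibrations. The main obstacle is precisely the asymmetry that $F$, being a left adjoint, need not preserve fibrant objects: for a core cofibration $i \colon A \hookrightarrow B$ the acyclic cofibration $Fi$ need not have fibrant codomain, so one cannot directly feed it into Properties~\ref{properties:saturated}\ref{item:sat2} in $\Acal$ to turn it into a trivial cofibration and then apply~\ref{item:simplified2}.

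To circumvent this, suppose $i \colon A \hookrightarrow B$ is a core cofibration with $Fi$ an acyclic cofibration, and let me construct a fibrant $F$-image to route through. First I would take a fibrant replacement $k \colon FB \hookrightarrow Y$ of $FB$ in $\Acal$ (a trivial cofibration into a fibrant $Y$); since $FB$ is cofibrant and $k$ is a cofibration, $Y$ is also cofibrant, hence bifibrant, and $k$ is a core acyclic cofibration, so a weak equivalence by Properties~\ref{properties:model}\ref{item:model1}. Applying condition~\ref{item:simplified1} to $k \colon FB \to Y$, with $B$ in the role of the cofibrant object, yields a cofibration $\ell \colon B \hookrightarrow Z$ in $\Ccal$ together with a weak equivalence $w \colon FZ \to Y$ with $FZ$ \emph{fibrant} and $w \circ F\ell = k$. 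As $FZ$ is moreover cofibrant, $F\ell$ is a core cofibration, and from $w$ and $k$ being weak equivalences it is one too by two-out-of-three, hence an acyclic cofibration by Properties~\ref{properties:model}\ref{item:model1}; since its codomain $FZ$ is fibrant, Properties~\ref{properties:saturated}\ref{item:sat2} makes it a trivial cofibration, and condition~\ref{item:simplified2} gives that $\ell$ is acyclic in $\Ccal$. Likewise $F(\ell \circ i) = F\ell \circ Fi$ is a composite of acyclic cofibrations with fibrant codomain $FZ$, hence again a trivial cofibration, so a second application of~\ref{item:simplified2} shows $\ell \circ i$ is acyclic in $\Ccal$.

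It remains to cancel $\ell$ on the left, and here I would use that acyclic cofibrations enjoy a \emph{left} cancellation property --- which, unlike right cancellation, holds in any \wfscat. Concretely, to lift $i$ against an arbitrary core fibration $p \colon E \twoheadrightarrow X$ along a square $(a \colon A \to E,\, b \colon B \to X)$ with $pa = bi$, I would first extend $b$ along the acyclic cofibration $\ell$ against the core fibration $X \to 1$ to obtain $\beta \colon Z \to X$ with $\beta \ell = b$, then lift $a$ along the acyclic cofibration $\ell \circ i$ against $p$ --- the relevant square commutes since $\beta(\ell i) = b i = p a$ --- to obtain $\gamma \colon Z \to E$ with $\gamma(\ell i) = a$ and $p\gamma = \beta$; the composite $\gamma \ell \colon B \to E$ is then the required filler, so $i$ is acyclic. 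This establishes condition~(ii) of Theorem~\ref{thm:main_transfert}. With both conditions verified, Theorem~\ref{thm:main_transfert} shows that $\Ccal$ is a weak model category and that $(F,U)$ is a Quillen equivalence, which completes the proof.
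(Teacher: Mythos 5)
Your proof is correct and follows essentially the same route as the paper's: reduce to Theorem~\ref{thm:main_transfert}, apply condition (i) to a fibrant replacement of $FB$ to produce a cofibration $\ell$ with $F\ell$ landing in a fibrant object, use Properties~\ref{properties:saturated}\ref{item:sat2} together with condition (ii) to conclude that $\ell$ and $\ell\circ i$ are acyclic, and then cancel $\ell$. The only difference is that you justify the final left-cancellation by an explicit lifting argument valid in any pre-model category, whereas the paper cites the two-out-of-three (``elimination'') property of acyclic cofibrations; your version is in fact slightly more careful, since $\Ccal$ is not yet known to be a weak model category at that point in the argument.
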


\begin{proof}

By Theorem~\ref{thm:main_transfert} we need only show that if $i : X \hookrightarrow Y$ is a core cofibration in $\Ccal$ such that $Fi$ is acyclic then $i$ is acyclic. To this end, consider a fibrant replacement:
\[ FX \hookrightarrow FY \overset{\approx}{\hookrightarrow} Z \]
By Condition~\ref{item:simplified1} there is a cofibration $Y \hookrightarrow Z'$ in $\Ccal$ and factorisation
\[ FY \hookrightarrow FZ' \overset{\approx} \rightarrow Z \]
of the map to the fibrant replacement such that $FZ'$ is fibrant and such that the map $FZ' \to Z$ is a weak equivalence. In particular the composite $X \hookrightarrow Y \hookrightarrow Z'$ is a core cofibration whose image by $F$ is an acyclic cofibration with fibrant target.  Hence its image under $F$ is a trivial cofibration (see Properties~\ref{properties:saturated}\ref{item:sat2}), so the map $X \hookrightarrow Z'$ is acyclic in $\Ccal$ by assumption.  A similar argument shows that the map $Y \hookrightarrow Z'$ is also acyclic in $\Ccal$.  Hence, by the elimination property of acyclic cofibrations (Properties~\ref{properties:model}.\ref{item:model2}), the map $X \hookrightarrow Y$ is an acyclic cofibration too.
\end{proof}

We also state explicitly the dual version of Theorem~\ref{thm:main_transfert}:

\begin{thm} \label{thm:main_dual_transfert} Let $\Ccal$ and $\Acal$ be two \wfscats~together with a Quillen adjunction:

\[ F: \Ccal \leftrightarrows \Acal : U \]

Assume that $\Ccal$ is a weak model category. Then $\Acal$ is a weak model category and the adjunction above is a Quillen equivalence if and only if the following two conditions are satisfied:

\begin{enumerate}[(i)]

\item Given a cofibrant object $X \in \Ccal$, a fibrant object $Y \in \Acal$ and a morphism:

\[ X \overset{i}{\rightarrow} U Y \]

there exists a fibration $p:Z \twoheadrightarrow Y$ in $\Acal$ and a factorisation:

\[
\begin{tikzcd}
& U Z \ar[d,->>,"U p"] \\
  X \ar[ur,"w","\sim"swap] \ar[r,"i"swap]  & U Y \\
\end{tikzcd}
\]

such that $w$ is a weak equivalence in $\Ccal$.\label{item:main1}

\item If $i$ is a core fibration in $\Acal$ and $Ui$ is an acyclic fibration in $\Ccal$, then $i$ is an acyclic fibration in $\Acal$.\label{item:main2}

\end{enumerate}

\end{thm}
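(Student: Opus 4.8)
The plan is to deduce this theorem directly from Theorem~\ref{thm:main_transfert} by passing to opposite categories, exploiting the fact that the entire setup is self-dual. First I would record the duality dictionary. The notion of \wfscat\ is manifestly self-dual, and since the relative cylinder object axiom is exactly dual to the relative path object axiom, $\Ccal^{\mathrm{op}}$ is a weak model category precisely when $\Ccal$ is. Under $(-)^{\mathrm{op}}$ the relevant classes interchange: cofibrations with fibrations, and likewise their acyclic, core and trivial variants, while cofibrant objects correspond to fibrant objects. Weak equivalences correspond to weak equivalences, since $\Ccal_{c\cup f}$ and the homotopy category are built by self-dual constructions, giving $Ho(\Ccal^{\mathrm{op}}) \simeq Ho(\Ccal)^{\mathrm{op}}$ with the localising classes swapped inside the union.

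Next I would set up the opposite adjunction. From $F \dashv U$ one obtains $U^{\mathrm{op}} \dashv F^{\mathrm{op}}$, an adjunction $U^{\mathrm{op}}: \Acal^{\mathrm{op}} \leftrightarrows \Ccal^{\mathrm{op}} : F^{\mathrm{op}}$. Because $U$ preserves fibrations, its opposite $U^{\mathrm{op}}$ preserves cofibrations; because $F$ preserves cofibrations, $F^{\mathrm{op}}$ preserves fibrations. Hence this is again a Quillen adjunction, now with the \emph{known} weak model category being $\tilde\Acal := \Ccal^{\mathrm{op}}$ (as $\Ccal$ is assumed to be a weak model category). I would then apply Theorem~\ref{thm:main_transfert} verbatim, with $\tilde\Ccal := \Acal^{\mathrm{op}}$, $\tilde\Acal := \Ccal^{\mathrm{op}}$, $\tilde F := U^{\mathrm{op}}$ and $\tilde U := F^{\mathrm{op}}$.

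The heart of the argument is then translating the hypotheses and conclusion back through the dictionary. A cofibrant object of $\tilde\Ccal = \Acal^{\mathrm{op}}$ is a fibrant object $Y$ of $\Acal$, a fibrant object of $\tilde\Acal = \Ccal^{\mathrm{op}}$ is a cofibrant object $X$ of $\Ccal$, and a morphism $\tilde F Y \to X$ in $\tilde\Acal$ — that is, $(UY)^{\mathrm{op}} \to X^{\mathrm{op}}$ — is exactly a morphism $i : X \to UY$ in $\Ccal$. The cofibration $j : X \hookrightarrow Z$ produced by Condition~(i) of Theorem~\ref{thm:main_transfert} becomes a fibration $p : Z \twoheadrightarrow Y$ in $\Acal$, and reversing the factorisation triangle turns $i = w \circ \tilde F j$ into $i = Up \circ w$ with $w : X \to UZ$ a weak equivalence in $\Ccal$ — precisely Condition~(i) of the present theorem. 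Likewise Condition~(ii) of Theorem~\ref{thm:main_transfert}, read in $\Acal^{\mathrm{op}}$, states that a core fibration $i$ in $\Acal$ with $Ui$ an acyclic fibration in $\Ccal$ is itself an acyclic fibration in $\Acal$, matching Condition~(ii) here. Finally, ``$\tilde\Ccal$ is a weak model category'' reads as ``$\Acal$ is a weak model category'', and a Quillen equivalence is carried to a Quillen equivalence by $(-)^{\mathrm{op}}$, since the induced adjunction on homotopy categories is the opposite of the original one and an adjoint equivalence dualises to an adjoint equivalence.

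I expect no genuine mathematical obstacle: the argument is a formal consequence of the self-duality of all the ingredients. The only point requiring care is the bookkeeping of the dictionary, and in particular checking that the orientation of the factorisation in Condition~(i) flips correctly — the ``$Fj$ then $w$'' factorisation of $i$ in the original statement must become the ``$w$ then $Up$'' factorisation in the dual. Once the dictionary is verified on each relevant class, the statement is a direct instance of Theorem~\ref{thm:main_transfert}.
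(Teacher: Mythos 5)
Your proposal is correct and is precisely the paper's own argument: the paper proves this theorem in one line by applying Theorem~\ref{thm:main_transfert} to the opposite Quillen adjunction $U^{\mathrm{op}} : \Acal^{\mathrm{op}} \leftrightarrows \Ccal^{\mathrm{op}} : F^{\mathrm{op}}$, leaving the duality dictionary implicit. Your careful verification of that dictionary (including the flip in the orientation of the factorisation in Condition~(i)) supplies exactly the details the paper omits.
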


It is proved by applying Theorem~\ref{thm:main_transfert} to the Quillen adjunction:

 \[ U^{op} : \Acal^{op} \leftrightarrows \Ccal^{op} : F^{op}. \]

\begin{Remark} Dualizing Proposition~\ref{prop:main_transfert_Simplified}, if we can further show that in Condition~\ref{item:main1} the object $Z$ can be chosen so that $U Z$ is cofibrant in $\Ccal$, then Condition~\ref{item:main2} can be weakened to the requirement that if $i$ is a core fibration and $Ui$ is a trivial fibration then $i$ is an acyclic fibration. \end{Remark}

\section{Model structures on algebraically cofibrant objects}\label{sect:cofibrant}

\begin{Theorem}\label{th:ModelStr_Cofibrant_object}
Let $\Acal$ be an accessible weak model category endowed with $(L,R)$ an accessible algebraic realisation of the (cofibration, trivial fibration)-weak factorisation system.  Let $Q$ be the associated cofibrant replacement comonad on $\Acal$.

Then there is an accessible weak model structure on $\Coalg{Q}$ such that:
\begin{enumerate}[(i)]
\item A map in $\Coalg{Q}$ is a cofibration or trivial cofibration if and only if its underlying map in $\Acal$ is.\label{item:coalg1}
\item The adjunction:

\[ U : \Coalg{Q} \leftrightarrows \Acal : G \]

where $U$ is the forgetful functor, is a Quillen equivalence.\label{item:coalg2}

\item Each object is cofibrant in $\Coalg{Q}$. \label{item:coalg3}

\item An arrow in $\Coalg{Q}$ is a weak equivalence or an acyclic cofibration if and only its underlying map in $\Acal$ is.\label{item:coalg4}

\item If $\Acal$ is core left saturated then $\Coalg{Q}$ is an accessible right semi-model category.\label{item:coalg5}

\item If $\Acal$ is combinatorial then so is $\Coalg{Q}$.\label{item:coalg6}

\end{enumerate}
\end{Theorem}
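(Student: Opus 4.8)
The plan is to realise the weak model structure on $\Coalg{Q}$ as the left transfer of $\Acal$ along the comonadic forgetful functor $U$, and then to verify the hypotheses of the simplified transfer result, Proposition~\ref{prop:main_transfert_Simplified}, applied to the adjunction $U \dashv G$ (so that in the notation there one has $\Ccal = \Coalg{Q}$ and the left adjoint $F$ is $U$). First I would construct the two weak factorisation systems on $\Coalg{Q}$ by transporting those of $\Acal$ across $U$. Given a map $f$ of $Q$-coalgebras, I would factor its underlying map in $\Acal$ using the accessible algebraic realisation $(L,R)$ (respectively a realisation of the trivial cofibration/fibration system) and equip the intermediate object with a $Q$-coalgebra structure by vertically composing, in the double category $\Coalg{L}$ of Section~\ref{sect:double}, the given coalgebra structure on the domain with the canonical $L$-coalgebra structure on the left factor. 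This yields functorial factorisations on $\Coalg{Q}$ whose left factors are, by construction, $U$-detected cofibrations and trivial cofibrations; since $Q$ is accessible and $(L,R)$ is an accessible realisation, $\Coalg{Q}$ is locally presentable and these factorisations are accessible. The inclusion of trivial cofibrations into cofibrations is inherited from $\Acal$, so $\Coalg{Q}$ is an accessible pre-model category, $U$ is left Quillen, and this establishes (i). Since the underlying map of any $Q$-coalgebra $(X,s)$ is the cofibration $\emptyset \hookrightarrow X$, the initial map $\emptyset \to (X,s)$ is a cofibration and every object of $\Coalg{Q}$ is cofibrant, giving (iii).

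Next I would verify the two conditions of Proposition~\ref{prop:main_transfert_Simplified}. For condition (i), given a map $UX \to Y$ with $X \in \Coalg{Q}$ (automatically cofibrant) and $Y \in \Acal$ fibrant, I would factor it as $UX \to E \to Y$ via $(L,R)$; lifting $E$ to a $Q$-coalgebra $Z$ by the same vertical composition gives a cofibration $X \hookrightarrow Z$ in $\Coalg{Q}$ with underlying left factor $L(UX \to Y)$, while the right factor is a trivial fibration $UZ \to Y$. Because $Y$ is fibrant, so is $UZ$, and the right factor is then an acyclic fibration between fibrant objects, hence a weak equivalence by the dual of Properties~\ref{properties:model}\ref{item:model1}; this furnishes exactly the required factorisation, fibrancy of $UZ$ included. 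Condition (ii) is immediate: if $i$ is a (core) cofibration of $\Coalg{Q}$ with $Ui$ a trivial cofibration, then by (i) the map $i$ is itself a trivial cofibration in $\Coalg{Q}$, and trivial cofibrations are acyclic. Proposition~\ref{prop:main_transfert_Simplified} then yields that $\Coalg{Q}$ is a weak model category and that $(U,G)$ is a Quillen equivalence, which is (ii).

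For (iv), the left Quillen functor $U$ preserves acyclic cofibrations and, by the second condition of Theorem~\ref{thm:main_transfert} applied to the Quillen equivalence just obtained, reflects them; as every object of $\Coalg{Q}$ is cofibrant this identifies the acyclic cofibrations with the $U$-detected ones. For weak equivalences one uses that $U$ reflects those between cofibrant objects (Properties~\ref{properties:adj}\ref{item:adj2}) and preserves them, the latter by factoring a map as a cofibration followed by a trivial fibration and observing that $U$ carries the two factors to an acyclic cofibration and a trivial fibration and then applying $2$-out-of-$3$. For (v), suppose $\Acal$ is core left saturated. Every object of $\Coalg{Q}$ being cofibrant, $\Coalg{Q}$ is right saturated by the dual of Properties~\ref{properties:saturated}\ref{item:sat1}, and its fibrant objects are bifibrant, hence admit path objects coming from the weak model structure; moreover core left saturation of $\Acal$ together with (i) and (iv) forces every acyclic cofibration of $\Coalg{Q}$ to be a trivial cofibration, so $\Coalg{Q}$ is left saturated. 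By the dual of the characterisation in Properties~\ref{properties:semivsweak}\ref{item:semi2}, $\Coalg{Q}$ is a right semi-model category, accessible by the first paragraph. Finally, (vi) amounts to exhibiting generating \emph{sets} for the two transferred weak factorisation systems out of generating sets of $\Acal$; this is the sole point that genuinely uses the combinatorial rather than merely accessible hypothesis, and I would treat it by a presentability argument showing that the $U$-detected cofibrations and trivial cofibrations are cofibrantly generated by a set.

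I expect the principal obstacle to be the first step: the construction of the transferred weak factorisation systems on $\Coalg{Q}$, and in particular the coherent assignment of $Q$-coalgebra structures to the intermediate objects of the factorisations. This is exactly where the hypothesis of an accessible algebraic realisation is indispensable, with the vertical composition of Section~\ref{sect:double} doing the essential work; once these factorisations are in place, the remaining verifications reduce to direct applications of the transfer machinery.
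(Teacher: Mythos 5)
Your overall strategy---left-transferring the pre-model structure along $U$ and then verifying the conditions of Proposition~\ref{prop:main_transfert_Simplified}, with the vertical composition in the double category $\Coalg{L}$ supplying the coalgebra structure on the intermediate object---is exactly the paper's, and your treatment of parts (ii)--(vi) tracks the actual proof closely. The gap is in your first step. You propose to \emph{construct} the two transferred weak factorisation systems on $\Coalg{Q}$ by factoring the underlying map of a coalgebra morphism $f\colon(X,\boldsymbol{x})\to(Y,\boldsymbol{y})$ through $(L,R)$ and equipping $Ef$ with the vertical composite structure $\mathbf{Lf}\cdot\boldsymbol{x}$. This does not produce a factorisation \emph{in} $\Coalg{Q}$: while $Lf$ does become a morphism of $Q$-coalgebras, there is no reason for the right factor $Rf\colon Ef\to Y$ to be a morphism of $Q$-coalgebras from $(Ef,\mathbf{Lf}\cdot\boldsymbol{x})$ to $(Y,\boldsymbol{y})$. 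Moreover, even granting that, a weak factorisation system on $\Coalg{Q}$ with left class $U^{-1}(\text{cofibrations})$ requires the right factor to have the right lifting property in $\Coalg{Q}$ against all $U$-detected cofibrations, and this does not follow from $URf$ being a trivial fibration in $\Acal$---a lift chosen in $\Acal$ need not be a coalgebra map. This is precisely why left transfer of a weak factorisation system is a nontrivial theorem; the paper does not construct the factorisations by hand but invokes Theorem~2.6 of \cite{garner2020lifting} for the accessible left transfer along the accessible left adjoint $U$ (and Remark~3.8 of \cite{makkai2014cellular} for the combinatorial statement (vi), which you also leave as a sketch).

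The explicit vertical-composition construction is used in the paper only where it is actually valid: in verifying Condition~(i) of Proposition~\ref{prop:main_transfert_Simplified}, where one needs only that the left factor $Lf$ underlies a cofibration of $\Coalg{Q}$ (via the coalgebra dual of Proposition~5 of \cite{Bourke2016Accessible} and the double-category composition) and that the right factor $Rf$ is a weak equivalence in $\Acal$ with $Ef$ fibrant---it is \emph{not} required to be a coalgebra map. Your verification of that condition, and of Condition~(ii) via the definition of the transferred trivial cofibrations, is correct; once you replace your first step by an appeal to the transfer theorems for accessible and combinatorial weak factorisation systems, the argument closes up and coincides with the paper's.
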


\begin{proof}
Since $U$ is an accessible left adjoint between locally presentable categories, there exist -- by Theorem $2.6$ of \cite{garner2020lifting} -- accessible weak factorisation systems on $\Coalg{Q}$ whose left classes consist of those maps $f$ for which $Uf$ is a cofibration or trival cofibration respectively.   We thus have a Quillen adjunction of pre-model categories for which \ref{item:coalg1} holds. To prove that this a weak model structure on $\Coalg{Q}$ satisfying \ref{item:coalg2} above, it therefore suffices to verify Condition~\ref{item:simplified1} of Proposition~\ref{prop:main_transfert_Simplified}.  

To this end, let $(X,\boldsymbol{x})$ be a cofibrant $Q$-coalgebra -- such is specified by a cofibrant object $X \in \Acal$ together with an $L$-coalgebra structure $\boldsymbol{x}:\emptyset \to X$.  As in Section~\ref{sect:algebraic} we use boldface to indicate that $\boldsymbol{x}$ is a morphism $x:\emptyset \to X$ equipped with additional structure.  Consider a morphism $f:X \to Y$ with $Y$ fibrant in $\Acal$.  We factor $f$ using the $(L,R)$-algebraic weak factorisation system on $\Acal$ as below
\[ X \overset{Lf}{\rightarrow} Ef \overset{Rf}{\twoheadrightarrow} Y \] 

so that $Lf$ is a cofibration between cofibrant objects and $Rf$ an acylic fibration between fibrant objects. Moreover $Lf$ obtains the structure of a (co-free) $L$-coalgebra $\mathbf {Lf}:X \to Ef$.
We then consider the composite
\begin{equation*}
\xymatrix{
\emptyset \ar[d]_{\boldsymbol{x}} \ar[r]^{1} & \emptyset \ar[d]^{ \boldsymbol{x}} \\
X \ar[d]_{\mathbf{1}} \ar[r]^{1} & X \ar[d]^{\mathbf{\mathbf {Lf}}} \\
X \ar[r]_{Lf} & Ef
}
\end{equation*}
in which the upper square is the identity $L$-coalgebra map.  The lower square is an $L$-coalgebra morphism by the coalgebra dual of Proposition 5 of \cite{Bourke2016Accessible}.  Now since $\Coalg{L}$ is, as described in Section~\ref{sect:double}, a double category, the composite square is a morphism of $L$-coalgebras $\boldsymbol{x} \to \mathbf{Lf} \circ \boldsymbol{x}$.  This equips the fibrant object $Ef$ with the structure of a $Q$-coalgebra $(Ef, \mathbf{Lf} \circ \boldsymbol{x})$ such that $Lf:(X,\boldsymbol{x}) \to (Ef, \mathbf{Lf} \circ \boldsymbol{x})$ is a morphism of $Q$-coalgebras. In particular, it is a cofibration in $\Coalg{Q}$ whose image under $U$ is $Lf$, therefore verifying Condition \ref{item:simplified1} of Proposition~\ref{prop:main_transfert_Simplified}, as required.

For \ref{item:coalg3} observe that, by definition of $Q$, for each $Q$-coalgebra $X$, the map $!:\emptyset \to X$ is an $L$-coalgebra, hence a cofibration.

Then for \ref{item:coalg4} since $U$ is a left Quillen equivalence it preserves and detects weak equivalences between cofibrant objects.  Since all objects of $\Coalg{Q}$ are cofibrant an arrow $f \in \Coalg{Q}$ is a weak equivalence if and only if $Uf \in \Acal$ is a weak equivalence.  Since $U$ is left Quillen it preserves acyclic cofibrations.  Conversely if $Uf$ an acyclic cofibration between cofibrant objects, then $Uf$ is both a cofibration and a weak equivalence in $\Acal$.  Hence, by the above, $f$ is both a cofibration and weak equivalence in $\Coalg{Q}$.  Since each object of $\Coalg{Q}$ is cofibrant $f$ is furthermore a core cofibration and so an acyclic cofibration by Properties~\ref{properties:model}\ref{item:model1}.

For \ref{item:coalg5}, as every object in $\Coalg{Q}$ is cofibrant, all fibrant objects admit path objects. Hence Properties~\ref{properties:semivsweak}.\ref{item:semi2} implies that it is a right semi-model category if and only if it is left saturated and core right saturated.  As every object of $\Coalg{Q}$ is cofibrant it is automatically right saturated (see Properties~\ref{properties:saturated}\ref{item:sat1}). Furthermore \ref{item:coalg4} shows that if $\Acal$ is core left saturated then $\Coalg{Q}$ is left saturated: indeed, given an acyclic cofibration $j$ in $\Coalg{Q}$, $Uj$ is an acyclic core cofibration in $\Acal$, so that if $\Acal$ is core left saturated then it is a trivial cofibration and so $j$ is a trivial cofibration too.

Condition \ref{item:coalg6} holds by Remark 3.8 in \cite{makkai2014cellular}, which proves that combinatorial weak factorisation systems between locally presentable categories can be left transferred along a left adjoint and, moreover, the transferred weak factorisation system is again combinatorial.
\end{proof}

\begin{Remark}\label{rk:ModelStr_Cofibrant_object_more_gen}
Analysing the above, one sees that Theorem~\ref{th:ModelStr_Cofibrant_object} applies more generally to any accessible algebraic weak factorisation system $(L,R)$ on $\Acal$ satisfying the following two conditions. 
\begin{itemize}\label{eq:cond}
\item[(1)] If $f:X \to Y$ is an arrow from a cofibrant to a fibrant object, then it can be factored as a cofibration with an $L$-coalgebra structure followed by an acyclic fibration.
\item[(2)] If $\emptyset \to X$ admits an $L$-coalgebra structure then $X$ is cofibrant.
\end{itemize}

Moreover, if just (1) holds then we can still prove Parts~\ref{item:coalg1},\ref{item:coalg2}, \ref{item:coalg6} of Theorem~\ref{th:ModelStr_Cofibrant_object} and a restricted version of \ref{item:coalg4}:

\begin{itemize}
\item[$(iv')$] An arrow between two objects of $\Coalg{Q}$ whose underlying object are cofibrants in $\Ccal$ is a weak equivalence or an acyclic cofibration if and only its underlying map in $\Acal$ is.
\end{itemize}

Condition (2) is required only for Parts~\ref{item:coalg3} and \ref{item:coalg5} and the full version of \ref{item:coalg4}. 
\end{Remark}

\begin{Remark}\label{rk:endofunctor} We can also formulate a version of Theorem~\ref{th:ModelStr_Cofibrant_object} that does not involve an algebraic weak factorisation system but merely an accessible weak factorisation system, with a chosen functorial factorisation $(L,R)$.  In this case $L$ is not a comonad but merely a copointed endofunctor, whilst $R$ is a pointed endofunctor.  Nonetheless we can consider the $L$-coalgebras (for the copointed endofunctor) and $R$-algebras respectively and we require either that the underlying weak factorisation system of $(L,R)$ is the (cofibration,trivial fibration) weak factorisation system, or, more generally, satisfies the analogues of Conditions (1) and (2) of Remark~\ref{rk:ModelStr_Cofibrant_object_more_gen}. Restricting $L$ to the coslice under the initial object, we obtain the copointed endofunctor $Q$ as before and the accessibility assumption ensures that the category $\Coalg{Q}$ of coalgebras for the copointed endofunctor is locally presentable and that $U:\Coalg{Q} \to \Acal$ has a left adjoint.  We claim that the conclusions of Theorem~\ref{th:ModelStr_Cofibrant_object} hold in this modified setting, with the same dependencies upon Conditions (1) and (2) as in the algebraic case.

Indeed, one follows the proof of Theorem~\ref{th:ModelStr_Cofibrant_object} with one exception: given $(X,\boldsymbol{x}) $ a cofibrant $Q$-coalgebra and $X \to Z \to Y$ a factorisation as a cofibration with $L$-coalgebra structure followed by an acyclic fibration, the argument of Lemma 2.12 of \cite{garner2020lifting} allows us to put an $L$-coalgebra structure on $\emptyset \to Z$ that makes $X \to Z$ a morphism (and hence a cofibration) of $Q$-coalgebras.
\end{Remark} 

\begin{Example}\label{ex:campbell} 
In the present example we describe a combinatorial model structure on the category of small categories $\Cat$ for which the induced right model structure on the category of algebraically cofibrant objects --- simply the category of graphs --- is not a full Quillen model structure.  This is a modification of an example that we learned from Alexander Campbell, and which will appear in \cite{Campbell2020}.

To begin with, we equip $\Cat$ with the Quillen model structure in which:
\begin{itemize}
\item A functor $f:X \to Y$ is a weak equivalence if and only if the induced function $\pi_0(f) : \pi_0(X) \to \pi_0(Y)$ between sets of path components is a bijection.
\item The cofibrations are generated by the two inclusions $J=\{ \emptyset \to \bullet, (\bullet \quad \bullet) \rightarrow (\bullet \to \bullet)\}$.  In particular the trivial fibrations are functors that are full and surjective on objects.
\end{itemize}
In order to verify that this is a Quillen model structure, let us check that the two classes satisfy the conditions of Smith's theorem, Theorem 1.7 of \cite{Beke2000Sheafifiable}.  Certainly each trivial fibration is essentially surjective on objects, and so a weak equivalence.  The latter class, being the preimage under $\Pi_0:\Cat \to \Set$ of the isomorphisms,  satisfies 2-out-of-3 and is retract stable.  Since $\Pi_0$ is cocontinuous, having right adjoint the discrete category functor, it follows that the weak equivalences are stable under pushout and, moreover, accessibly embedded in the category of morphisms $\Cat^{\atwo}$.  Since cofibrations are always pushout stable it follows that the intersection of the cofibrations and weak equivalences is also pushout stable.  Therefore the conditions of Smith's theorem are met --- the model structure exists.

Next consider the forgetful functor $U:\Cat \to \Gph$ to the category of graphs and $F$ its left adjoint with $\eta:1 \to UF$ and $\epsilon:FU \to 1$ denoting the unit and counit.  We will prove that $FU$ is the algebraic cofibrant replacement comonad on $\Cat$ induced by the set of generating cofibrations $J$.  Letting $(L,R)$ denote the cofibrantly generated algebraic weak factorisation system an $R$-map is specified by a functor $f:Y \to Z$ equipped with (1) for each $z  \in Z$ an object $\phi_z \in Y$ with $f \phi_z = z$ and (2) for each $(y,\alpha:fy \to fz,z)$ a morphism $\phi_\alpha:y \to z$ such that $f \phi_\alpha = \alpha$.  Observe that if $f$ is bijective on objects, this amounts to giving a section of $Uf$ in $\Gph$.  In particular, since $\epsilon:FUX \to X$ is the identity on objects the unit component $\eta_{UX}:UX \to UFUX$ equips it with the structure of an $R$-map.  It is easy to say that this is the free $R$-map on $\emptyset \to X$, in the sense that if $f:Y \to Z$ is an $R$-map and $g:Y \to Z$ arbitrary, then there exists a unique morphism $\overline{g}:FUX \to Y$ making the square below left
\begin{equation*}
\xymatrix{
FUX \ar[d]_{\epsilon_{X}} \ar[r]^{\overline{g}} & Y \ar[d]^{f} \\
X \ar[r]^{g} & Z
}
\hspace{1.5cm}
\xymatrix{
FUX \ar[d]_{\epsilon_{X}} \ar[r]^{\Delta_{X}} & FUFUX \ar[d]^{\epsilon_{X} \circ \epsilon_{FUX}} \\
X \ar[r]^{1} & X}
\end{equation*}
into a morphism of $R$-maps.  It then follows, by definition of the algebraic cofibrant replacement comonad, that its underlying copointed endofunctor is $\epsilon:FU \to 1$.  By Equation 2.11 of \cite{Bourke2016Accessible} the comultiplication $\Delta_{X}:FUX \to FUFUX$ has the property that the square above right is an $R$-map morphism where the right hand side uses the composition of $R$-maps.  In our setting, this commutativity amounts to commutativity with sections at the level of underlying graphs  --- that is, to
the equation
$$
U\Delta_{X} \circ \eta_{UX} = \eta_{UFUX} \circ \eta_{UX}.
$$
Now this equation is satisfied by $\Delta_{X} = F\eta_{UX}$ and, by the universal property of $FUX$, by $F\eta_{UX}$ alone.  Therefore the cofibrant replacement comonad is given by $(FU,\epsilon, F\eta_U)$ as claimed.

Now since the monad $T=UF$ is parametric right adjoint --- see, for instance Example 2.5 of \cite{Weber2007Familial} --- it preserves all connected limits.  Since $U$ creates limits it follows that $F$ preserves connected limits --- in particular, coreflexive equalisers.  As is well known the monad $T$ is also cartesian so that, in particular, its unit $1 \to UF$ is a cartesian natural transformation.  Since the pullback of an isomorphism is an isomorphism, it follows that $F$ reflects isomorphisms.  Therefore, by the comonad dual of Beck's monadicity theorem, the adjunction $F \dashv U$ is comonadic and so $\Gph \simeq \Coalg{FU}$ is the category of algebraically cofibrant objects.

Applying Theorem~\ref{th:ModelStr_Cofibrant_object} we left transfer the model structure on $\Cat$ to a right semi-model structure on $\Gph$ in which all objects are cofibrant.  Let us examine, firstly, its cofibrations and weak equivalences.  Let $f:X \to Y \in \Gph$.  To say that $Ff:FX \to FY$ induces a bijection on path components is equally to say that $f:X \to Y$ induces a bijection on path components of the graphs.  By the simple nature of the generating cofibrations $J$ in $\Cat$ it is also straightforward to see that $Ff:FX \to FY$ is a cofibration just when $f:X \to Y$ is a monomorphism --- in fact, the $J$-cellular maps $g:FX \to FY$ are precisely those of the form $g=Ff$ for $f:X \to Y$ a monomorphism.  

The right semi model structure is not a Quillen model structure.  To see this, let $\bullet$ denote the graph with a single object but no edges.  Indeed, the codiagonal $\bullet \coprod \bullet \to \bullet$ is not a weak equivalence since it has more than more path component.  However, it is a trivial fibration.  For in any square of the form:

\[ \begin{tikzcd}
  X \ar[d,hook,"i"] \ar[r] & \bullet \coprod \bullet \ar[d] \\
 Y \ar[r] & \bullet 
\end{tikzcd} \]

neither graphs $X$ and $Y$ can have an edge --- hence $i$ is just an injection of sets, so that there is a diagonal filler as in the category of sets, where injections have the left lifting property with respect to surjections.  This right semi-model structure on $\Gph$ is in fact that considered in Section 4.1 of \cite{henry2018weakmodel}, whose fibrant objects are the so-called setoids.

\end{Example}

\begin{Theorem}\label{thm:create}
Under the assumptions of Theorem~\ref{th:ModelStr_Cofibrant_object} suppose further that $\Acal$ is a left semi-model category and that cylinder objects lift along $U:\Coalg{Q} \to \Acal$ --- in the sense that if $(X,\boldsymbol{x})$ is a $Q$-coalgebra, then there exists a cylinder object factorisation $X \coprod X \to IX \to X \in \Acal$ that lifts to a factorisation of $(X,\boldsymbol{x}) \coprod (X,\boldsymbol{x}) \to (X,\boldsymbol{x}) \in \Acal$.  Then $\Coalg{Q}$ is a Quillen model category.
\end{Theorem}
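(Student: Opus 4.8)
The plan is to exhibit $\Coalg{Q}$ as a \emph{left} semi-model category and then invoke Properties~\ref{properties:semivsweak}\ref{item:semi1}, which promotes a left semi-model category in which every object is cofibrant to a genuine Quillen model category. Since Theorem~\ref{th:ModelStr_Cofibrant_object}\ref{item:coalg3} guarantees that all objects of $\Coalg{Q}$ are cofibrant, this final upgrade will be immediate once the left semi-model structure is established. To obtain that structure I would verify the three clauses of the characterisation in Properties~\ref{properties:semivsweak}\ref{item:semi2}: that $\Coalg{Q}$ is right saturated, core left saturated, and that every cofibrant object admits a cylinder object. (That $\Coalg{Q}$ is already a factorisation weak model category is given by Theorem~\ref{th:ModelStr_Cofibrant_object}.)

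The saturation clauses are already in hand. As every object of $\Coalg{Q}$ is cofibrant, it is right saturated by Properties~\ref{properties:saturated}\ref{item:sat1}. For core left saturation, first note that $\Acal$, being a left semi-model category, is itself core left saturated: a core cofibration is a trivial cofibration iff it is a weak equivalence (definition of left semi-model category) iff it is acyclic (Properties~\ref{properties:model}\ref{item:model1}), so the core trivial and core acyclic cofibrations coincide. The argument in the proof of Theorem~\ref{th:ModelStr_Cofibrant_object}\ref{item:coalg5} then shows that $\Coalg{Q}$ is left saturated, hence a fortiori core left saturated.

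The substance of the proof is the cylinder object clause, and this is exactly what the lifting hypothesis supplies. Given a $Q$-coalgebra $(X,\boldsymbol{x})$, its underlying object $X$ is cofibrant in $\Acal$ by Theorem~\ref{th:ModelStr_Cofibrant_object}\ref{item:coalg3}, so by hypothesis there is a cylinder object $X \coprod X \hookrightarrow IX \to X$ in $\Acal$ lifting to a factorisation $(X,\boldsymbol{x}) \coprod (X,\boldsymbol{x}) \to (IX, \boldsymbol{i}) \to (X,\boldsymbol{x})$ of the fold map in $\Coalg{Q}$. Since $U$ is a left adjoint it preserves the coproduct, so the underlying $\Acal$-map of the first leg is the cofibration $X \coprod X \hookrightarrow IX$; by Theorem~\ref{th:ModelStr_Cofibrant_object}\ref{item:coalg1} the first leg is therefore a cofibration in $\Coalg{Q}$. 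For the second leg, observe that $X$, $X \coprod X$ and $IX$ are all cofibrant in $\Acal$, that the first-coprojection composite $X \hookrightarrow IX$ is an acyclic cofibration, and that $X \to IX \to X$ is the identity; two-out-of-three among these cofibrant objects then forces $IX \to X$ to be a weak equivalence in $\Acal$, whence by Theorem~\ref{th:ModelStr_Cofibrant_object}\ref{item:coalg4} the second leg is a weak equivalence in $\Coalg{Q}$. Thus $(X,\boldsymbol{x}) \coprod (X,\boldsymbol{x}) \hookrightarrow (IX,\boldsymbol{i}) \overset{\sim}{\rightarrow} (X,\boldsymbol{x})$ is a cylinder object for $(X,\boldsymbol{x})$.

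With all three clauses verified, Properties~\ref{properties:semivsweak}\ref{item:semi2} makes $\Coalg{Q}$ a left semi-model category, and Properties~\ref{properties:semivsweak}\ref{item:semi1} then delivers the Quillen model structure. I do not anticipate a serious obstacle: the only genuinely new input, namely constructing cylinder objects compatibly with the coalgebra structure, is precisely the content of the lifting hypothesis, and everything else is the bookkeeping of transporting cofibrations and weak equivalences across $U$ through parts \ref{item:coalg1} and \ref{item:coalg4}. The single point demanding care is confirming that the second leg of the lifted factorisation is a weak equivalence, which rests on the two-out-of-three argument in $\Acal$ together with the cofibrancy of $IX$ that makes weak equivalences available there.
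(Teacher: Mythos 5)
Your proposal is correct and follows essentially the same route as the paper: reduce to verifying the characterisation of left semi-model categories in Properties~\ref{properties:semivsweak}\ref{item:semi2} (right saturation from all objects being cofibrant, core left saturation transported from $\Acal$ via the reflection of trivial and acyclic cofibrations along $U$, and cylinder objects from the lifting hypothesis), then upgrade to a Quillen model category since every object is cofibrant. Your extra verification that the lifted factorisation really is a cylinder object in $\Coalg{Q}$ is a welcome elaboration of a step the paper leaves implicit, though it is slightly more direct to note that the coprojection composite $X \hookrightarrow IX$ is an acyclic cofibration in $\Acal$ and apply Theorem~\ref{th:ModelStr_Cofibrant_object}\ref{item:coalg4} to it immediately.
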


\begin{proof}
As $\Acal$ is a left semi-model category, its core acyclic cofibrations are trivial cofibrations.  Now by Theorem~\ref{th:ModelStr_Cofibrant_object} $U$ reflects trivial and acyclic cofibrations.  Since all objects are cofibrant in $\Coalg{Q}$ it follows that it is left saturated -- that is, its acyclic and trivial cofibrations coincide.  Since all objects are cofibrant it is automatically right saturated by Properties~\ref{properties:saturated}\ref{item:sat1}.  Furthermore, since cylinder objects lift along $U$, $\Coalg{Q}$ has cylinder objects for all cofibrant objects.  Therefore by Properties~\ref{properties:semivsweak}\ref{item:semi2} $\Coalg{Q}$ is a left semi-model category.  But a left semi-model category in which all objects are cofibrant is a Quillen model category, so this concludes the proof.
\end{proof}

In the main theorem of \cite{ching2014coalgebraic}, the authors use simplicial enrichment to lift the model structure to algebraically cofibrant objects.  Below we give a new proof of this result using liftings of cylinder objects, as opposed to a bar construction.  To state the result, recall from \cite{Blumberg2014} and \cite{ching2014coalgebraic} that if $\Acal$ is a combinatorial simplicially enriched model category, then one can run the \emph{enriched} version of the algebraic small object argument to construct a simplicially enriched cofibrant replacement comonad $Q$.

\begin{Theorem}[Riehl-Ching]
Let $\Acal$ be a combinatorial simplicial model category and $Q$ the simplicially enriched cofibrant replacement comonad on $\Acal$.  Then the transferred weak model structure on $\Coalg{Q}$ is in fact a simplicial Quillen model structure and the Quillen equivalence with $\Acal$ an enriched Quillen equivalence.
\end{Theorem}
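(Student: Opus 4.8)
The plan is to deduce the statement from Theorem~\ref{thm:create} rather than from a bar construction. First I would observe that a combinatorial simplicial model category is in particular a combinatorial Quillen model category, hence a left semi-model category, and that it is accessible; moreover the \emph{enriched} algebraic small object argument of \cite{Blumberg2014, ching2014coalgebraic} produces an accessible algebraic realisation of the (cofibration, trivial fibration) weak factorisation system whose cofibrant replacement comonad $Q$ is simplicially enriched. With these observations in place, Theorem~\ref{th:ModelStr_Cofibrant_object} already supplies the transferred accessible weak model structure on $\Coalg{Q}$, the fact that every object is cofibrant, and the Quillen equivalence $U:\Coalg{Q} \leftrightarrows \Acal:G$. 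By Theorem~\ref{thm:create} it then remains only to verify that cylinder objects lift along $U$.

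For that verification I would use the simplicial tensor. The content of ``$Q$ is simplicially enriched'' --- as it comes out of the enriched small object argument --- is that $\Coalg{Q}$ is tensored over $\SSet$ with the forgetful functor $U$ preserving tensors; equivalently, each endofunctor $- \otimes K$ of $\Acal$ lifts functorially, and naturally in $K$, to $\Coalg{Q}$. Granting this, for a $Q$-coalgebra $(X,\boldsymbol{x})$, whose underlying object is cofibrant, I would take $IX := X \otimes \Delta^1$; the inclusion $\partial\Delta^1 \hookrightarrow \Delta^1$ and the projection $\Delta^1 \to \Delta^0$ then yield, via the pushout--product axiom in $\Acal$, a cylinder object
\[ X \coprod X \;=\; X \otimes \partial\Delta^1 \;\hookrightarrow\; X \otimes \Delta^1 \;\longrightarrow\; X \otimes \Delta^0 \;=\; X \]
in which the first map is a cofibration and each leg $X \hookrightarrow X \otimes \Delta^1$ is a trivial cofibration. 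Since tensors lift to $\Coalg{Q}$ and $U$ preserves coproducts, this whole factorisation is the image under $U$ of a factorisation of the codiagonal $(X,\boldsymbol{x}) \coprod (X,\boldsymbol{x}) \to (X,\boldsymbol{x})$ in $\Coalg{Q}$, which is exactly the lifting required by Theorem~\ref{thm:create}. Hence $\Coalg{Q}$ is a genuine Quillen model category.

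It then remains to lift the structures to the simplicial level. Because $\Coalg{Q}$ is tensored over $\SSet$ with $U$ preserving tensors, and $U$ preserves colimits while reflecting cofibrations and trivial cofibrations (Theorem~\ref{th:ModelStr_Cofibrant_object}\ref{item:coalg1} and \ref{item:coalg4}), I would check the pushout--product axiom for $\Coalg{Q}$ by applying $U$: for a cofibration $f$ in $\Coalg{Q}$ and a (trivial) cofibration $j$ of simplicial sets, the Leibniz map satisfies $U(f \mathbin{\hat\otimes} j) = Uf \mathbin{\hat\otimes} j$, which is a (trivial) cofibration in the simplicial model category $\Acal$, so $f \mathbin{\hat\otimes} j$ is one in $\Coalg{Q}$. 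This makes $\Coalg{Q}$ a simplicial Quillen model category. Finally $U \dashv G$ is a simplicial adjunction --- $U$ being the forgetful functor of a simplicial comonad and $G$ its cofree right adjoint --- and it is already a Quillen equivalence by Theorem~\ref{th:ModelStr_Cofibrant_object}\ref{item:coalg2}; a simplicial Quillen adjunction that is a Quillen equivalence is precisely an enriched Quillen equivalence, completing the argument.

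The main obstacle, I expect, is the cylinder-lifting step, and specifically pinning down and deploying the fact that the enriched algebraic small object argument produces a comonad $Q$ whose category of coalgebras is tensored over $\SSet$ compatibly with $U$ --- this is where the genuine simplicial input enters and replaces the bar construction of \cite{ching2014coalgebraic}. Everything else is a routine transport across the left adjoint $U$, which preserves tensors, coproducts and pushouts and reflects cofibrations, trivial cofibrations and (since all objects are cofibrant) weak equivalences.
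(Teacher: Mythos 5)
Your proposal is correct and follows essentially the same route as the paper: both deduce the result from Theorem~\ref{thm:create} by noting that $U$ creates colimits and hence tensors by simplicial sets, so that the cylinder object $X \coprod X \to \Delta[1]\otimes X \to X$ obtained by tensoring with the standard interval lifts to $\Coalg{Q}$. The only difference is that you additionally spell out the verification of the pushout--product axiom on $\Coalg{Q}$ (via $U$ preserving Leibniz tensors and reflecting (trivial) cofibrations), a step the paper leaves implicit; this is a welcome elaboration rather than a different argument.
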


\begin{proof}
Since $\Acal$ is simplicial and $Q$ a simplicially enriched comonad the category of $Q$-coalgebras is a simplicially enriched category and the adjunction $\Coalg{Q} \leftrightarrows \Acal$ a simplicially enriched adjunction.  It remains, by Theorem~\ref{thm:create} to check that cylinder objects lift along $U$.  Now since $U$ creates colimits, $\Coalg{Q}$ is cocomplete as a simplicially enriched category, and so has tensors by simplicial sets and these are preserved by $U$. Now since $\Acal$ is a simplicial model category, each $X$ admits a cylinder object factorisation
\begin{equation*}
\xymatrix{ X \coprod X \ar[r]^{} & {\Delta[1] \otimes X} \ar[r] &  X.
}
\end{equation*}
given by tensoring $X$ by the standard interval $\Delta[0] \coprod \Delta[0] \to \Delta[1] \to \Delta[0]$.  Since $U$ creates tensors by $\Delta[1]$ and coproducts, if $(X,\boldsymbol{x})$ is a coalgebra, the factorisation lifts to a factorisation of the codiagonal on $(X,\boldsymbol{x})$ as required.
\end{proof}

The above theorem can be generalised as follows.  Suppose that $\Vcal$ is an accessible monoidal model category with cofibrant unit and that $\Acal$, to begin with, is a combinatorial left semi-model $\Vcal$-category --- here, by a left semi-model $\Vcal$-category, we mean one in which the tensoring functor $- . -:\Vcal \times \Acal$ is a left Quillen bifunctor in the usual sense \cite{Hovey1999Model}.\begin{footnote}{In fact, for our purposes it suffices that the two properties for $- . -:\Vcal \times \Acal$ to be a left Quillen bifunctor are required only for morphisms having cofibrant domains and codomains.}\end{footnote} We need, in this setting, to suppose further that for each object $X$ of $\Vcal$ the tensor functor $X \otimes - :\Acal \to \Acal$ preserves cofibrations.  Then --- as described in Corollary 13.2.4 of \cite{Riehl2014Categorical} --- then generating set of cofibrations in $\Acal$ permits the enriched algebraic small object argument.  In particular, one obtains an accessible enriched cofibrant replacement comonad $Q$ on $\Acal$ so that $\Coalg{Q}$ has the structure of a $\Vcal$-category.  Now since the monoidal unit $I$ of $\Vcal$ is cofibrant, then replacing $\Delta[1]$ in the preceding by any cylinder object factorisation $I \coprod I \to J \to I$ allows us to lift cylinder objects to $\Coalg{Q}$.  With the argument otherwise unchanged, we obtain:

\begin{Theorem}
Let $\Vcal$ be an accessible monoidal model category with cofibrant unit and $\Acal$ a combinatorial left semi-model $\Vcal$-category such that for each object $X$ of $\Vcal$ the tensor functor $X \otimes - :\Acal \to \Acal$ preserves cofibrations.  Let $Q$ be the $\Vcal$-enriched cofibrant replacement comonad on $Q$.   Then the transferred weak model structure on $\Coalg{Q}$ is in fact an enriched Quillen model structure and the Quillen equivalence with $\Acal$ an enriched Quillen equivalence.
\end{Theorem}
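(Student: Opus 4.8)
The plan is to run the proof of the preceding Riehl--Ching theorem with the standard interval $\Delta[1]$ replaced by a cylinder object for the monoidal unit $I$ of $\Vcal$. The transferred weak model structure on $\Coalg{Q}$, together with the Quillen equivalence $U : \Coalg{Q} \leftrightarrows \Acal : G$, is already supplied by Theorem~\ref{th:ModelStr_Cofibrant_object}, applied to the accessible enriched cofibrant replacement comonad $Q$ furnished by the enriched algebraic small object argument. Because $Q$ is a $\Vcal$-enriched comonad, $\Coalg{Q}$ is moreover a $\Vcal$-category and this adjunction is $\Vcal$-enriched; being a left adjoint, $U$ creates colimits. Since $\Acal$ is a left semi-model category the hypotheses of Theorem~\ref{thm:create} are in force, so to upgrade $\Coalg{Q}$ to a Quillen model category it suffices to lift cylinder objects along $U$.

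The main step is the construction of these cylinders. As $U$ creates colimits, $\Coalg{Q}$ is cocomplete as a $\Vcal$-category and hence admits tensors $V \otimes -$ by objects $V \in \Vcal$, with $U$ creating these tensors and all coproducts. Since $I$ is cofibrant I would fix a cylinder object factorisation $I \coprod I \hookrightarrow J \to I$ of its codiagonal in $\Vcal$; note that $J$ is then cofibrant. For any cofibrant $X \in \Acal$, tensoring with $X$ and using $I \otimes X \cong X$ gives $X \coprod X \to J \otimes X \to X$, and I claim this is a cylinder object for $X$. The pushout-product identity $f \mathbin{\hat{\otimes}} (\emptyset \to X) \cong f \otimes X$ rewrites the first leg as the Leibniz tensor of the cofibration $I \coprod I \hookrightarrow J$ with the cofibration $\emptyset \to X$, hence a cofibration, and rewrites the composite $X \to J \otimes X$ as the Leibniz tensor of the acyclic cofibration $I \to J$ with $\emptyset \to X$, hence an acyclic cofibration. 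Here I invoke the left Quillen bifunctor property only for maps between cofibrant objects ($I$, $J$ and $X$ are all cofibrant), which is exactly the weakening allowed by the footnote.

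Since $U$ creates tensors by $J$ and coproducts, for a $Q$-coalgebra $(X,\boldsymbol{x})$ this factorisation of the codiagonal lifts to a factorisation of the codiagonal of $(X,\boldsymbol{x})$ in $\Coalg{Q}$, supplying the required cylinder object; Theorem~\ref{thm:create} then yields that $\Coalg{Q}$ is a Quillen model category. Finally, the enriched compatibility of the model structure and the enriched refinement of the Quillen equivalence of Theorem~\ref{th:ModelStr_Cofibrant_object} follow formally from the fact that $U$ is a $\Vcal$-enriched left adjoint that preserves tensors and colimits and detects cofibrations and trivial cofibrations (Theorem~\ref{th:ModelStr_Cofibrant_object}\ref{item:coalg1}): as every object of $\Coalg{Q}$ is cofibrant, for a cofibration $i$ in $\Vcal$ and a cofibration $j$ in $\Coalg{Q}$ one has $U(i \mathbin{\hat{\otimes}} j) \cong i \mathbin{\hat{\otimes}} Uj$, a (trivial) cofibration in $\Acal$, whence $i \mathbin{\hat{\otimes}} j$ is a (trivial) cofibration in $\Coalg{Q}$. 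The main obstacle is precisely the middle step --- confirming that tensoring a cylinder object of the unit yields genuine cylinder objects --- which reduces to the pushout-product bookkeeping above, the only subtlety being to keep every input to the Quillen bifunctor cofibrant.
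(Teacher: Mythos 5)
Your proposal is correct and follows exactly the route the paper intends: the paper's own ``proof'' is the single remark that one reruns the Riehl--Ching argument with $\Delta[1]$ replaced by a cylinder object $I \coprod I \hookrightarrow J \to I$ of the cofibrant unit, and your pushout-product bookkeeping (keeping all inputs to the Quillen bifunctor cofibrant, as the footnote permits) just makes explicit the details the paper leaves implicit.
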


\section{Model structures on algebraically fibrant objects}\label{sect:fibrant}

Dually to Theorem~\ref{th:ModelStr_Cofibrant_object} one has:

\begin{Theorem}\label{th:ModelStr_fibrant_object}
Let $\Ccal$ be an accessible weak model category endowed with an accessible algebraic realisation $(L,R)$ of the (trivial cofibration, fibration) weak factorisation system, and let $T$ be the corresponding fibrant replacement monad.

Then there is an accessible weak model structure on $\Alg{T}$ such that:

\begin{enumerate}[(i)]
\item\label{item:alg1} A map in $\Alg{T}$ is a fibration or trivial fibration if and only if its underlying map in $\Ccal$ is.
\item\label{item:alg2} The adjunction:

\[ F : \Ccal \leftrightarrows \Alg{T} : U \]

where $U$ is the forgetful functor, is a Quillen equivalence.

\item\label{item:alg3} All objects of $\Alg{T}$ are fibrant.

\item\label{item:alg4} An arrow in $\Alg{T}$ is a weak equivalence or an acyclic fibration if and only its underlying map in $\Ccal$ is.

\item\label{item:alg5} If $\Ccal$ is core right saturated then $\Alg{T}$ is an accessible left semi-model category.

\item\label{item:alg6} If $\Ccal$ is combinatorial then so is $\Alg{T}$.

\end{enumerate}
\end{Theorem}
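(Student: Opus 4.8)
The plan is to dualise the proof of Theorem~\ref{th:ModelStr_Cofibrant_object} line by line, replacing the cofibrant replacement comonad $Q$, its coalgebras and the left-transfer along the left adjoint $U$ by the fibrant replacement monad $T$, its algebras and the right-transfer along the right adjoint $U:\Alg{T}\to\Ccal$. Since $T$ is an accessible monad on the locally presentable category $\Ccal$, the category $\Alg{T}$ is again locally presentable and $U$ is an accessible right adjoint. For part~\ref{item:alg1} I would invoke the dual of Theorem 2.6 of \cite{garner2020lifting}, applied to the accessible right adjoint $U$, to obtain accessible weak factorisation systems on $\Alg{T}$ whose right classes are, respectively, those maps $f$ with $Uf$ a fibration and those with $Uf$ a trivial fibration. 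This makes $(F,U)$ a Quillen adjunction of \wfscats\ and yields~\ref{item:alg1}; in particular a map of $T$-algebras is a trivial fibration exactly when its underlying map in $\Ccal$ is.

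To establish the weak model structure together with the Quillen equivalence~\ref{item:alg2}, I would apply the strengthened form of Theorem~\ref{thm:main_dual_transfert} described in the Remark following it, in which Condition~\ref{item:main2} is weakened using that $UZ$ may be taken cofibrant. That weakened condition holds automatically: if $i$ is a core fibration in $\Alg{T}$ with $Ui$ a trivial fibration, then $i$ is a trivial fibration in $\Alg{T}$ by~\ref{item:alg1}, hence acyclic. The substance is the dual of Condition~\ref{item:main1}. Given a cofibrant $X\in\Ccal$, a $T$-algebra $(Y,\boldsymbol{y})$ and a map $g:X\to UY$, I would factor $g$ with the $(L,R)$-\awfs as $X\overset{Lg}{\rightarrow}Eg\overset{Rg}{\twoheadrightarrow}UY$, so that $Lg$ is a trivial cofibration and $Rg$ a fibration carrying a cofree $R$-algebra structure $\mathbf{Rg}$. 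As $X$ is cofibrant and $Lg$ a cofibration, $Eg$ is cofibrant, so $Lg$ is an acyclic core cofibration and hence a weak equivalence by Properties~\ref{properties:model}\ref{item:model1}. Using the double-category structure on $\Alg{R}$ from Section~\ref{sect:double}, the vertical composite of the $R$-algebra morphism $\mathbf{Rg}\to\mathbf{1}_{UY}$ (Proposition 5 of \cite{Bourke2016Accessible}) with the identity morphism of $\boldsymbol{y}$ equips $Eg$ with a $T$-algebra structure $\boldsymbol{y}\cdot\mathbf{Rg}$ for which $Rg:(Eg,\boldsymbol{y}\cdot\mathbf{Rg})\to(Y,\boldsymbol{y})$ is a morphism of $T$-algebras, hence a fibration in $\Alg{T}$. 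Taking $Z=(Eg,\boldsymbol{y}\cdot\mathbf{Rg})$, $p=Rg$ and $w=Lg$ then verifies the dual of Condition~\ref{item:main1} with $UZ=Eg$ cofibrant, as required.

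The remaining parts dualise the corresponding parts of Theorem~\ref{th:ModelStr_Cofibrant_object}. For~\ref{item:alg3}, each $T$-algebra $Y$ has $Y\to 1$ an $R$-algebra and so a fibration, whence $Y$ is fibrant. For~\ref{item:alg4}, since $U$ is now a right Quillen equivalence it preserves and detects weak equivalences between fibrant objects (dual of Properties~\ref{properties:adj}\ref{item:adj2}), and as every object of $\Alg{T}$ is fibrant an arrow $f$ is a weak equivalence if and only if $Uf$ is; the right adjoint $U$ preserves acyclic fibrations, while if $Uf$ is an acyclic fibration between fibrant objects then $f$ is a fibration by~\ref{item:alg1} and a weak equivalence by the above, hence a core fibration and so acyclic by the dual of Properties~\ref{properties:model}\ref{item:model1}. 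For~\ref{item:alg5}, every object being fibrant gives cylinder objects for all cofibrant objects (apply the cylinder object axiom to $\emptyset\hookrightarrow X$) and makes $\Alg{T}$ left saturated, hence core left saturated, by Properties~\ref{properties:saturated}\ref{item:sat1}; moreover~\ref{item:alg4} shows that if $\Ccal$ is core right saturated then $\Alg{T}$ is right saturated, since an acyclic fibration $j$ in $\Alg{T}$ has $Uj$ an acyclic core fibration in $\Ccal$, hence a trivial fibration, so that $j$ is a trivial fibration by~\ref{item:alg1}. Properties~\ref{properties:semivsweak}\ref{item:semi2} then identifies $\Alg{T}$ as a left semi-model category. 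Finally, for~\ref{item:alg6}, when $\Ccal$ is combinatorial with generating sets $I,J$, the right-transferred weak factorisation systems on $\Alg{T}$ are cofibrantly generated by $FI$ and $FJ$ --- by adjointness $\rlp{(FI)}$ and $\rlp{(FJ)}$ are exactly the $U$-preimages of the trivial fibrations and fibrations of $\Ccal$ --- so $\Alg{T}$ is combinatorial.

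I expect the main obstacle to be the verification of the dual of Condition~\ref{item:main1}: one must check that the cofree $R$-algebra structure on $Rg$ composes correctly, via the double category $\Alg{R}$, with the ambient structure $\boldsymbol{y}$ to yield a genuine $T$-algebra structure on $Eg$ making $Rg$ a fibration of algebras, while simultaneously tracking that $w=Lg$ runs between cofibrant objects so as to be a weak equivalence. A secondary point needing care is the accessibility claim in~\ref{item:alg1}: unlike the coalgebra case one cannot simply pass to opposite categories, since the opposite of a locally presentable category need not be locally presentable, so the right-transfer of the accessible weak factorisation systems along $U$ must be invoked directly in the locally presentable setting.
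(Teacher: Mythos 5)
Your proposal is correct and is essentially the paper's own proof: the paper simply states that Theorem~\ref{th:ModelStr_fibrant_object} is the exact dual of Theorem~\ref{th:ModelStr_Cofibrant_object}, with the pre-model structure obtained by right transfer (again via Theorem 2.6 of \cite{garner2020lifting} in the accessible case, and by the small object argument in the combinatorial case), which is precisely the dualisation you carry out, including the vertical composite $\boldsymbol{y}\cdot\mathbf{Rg}$ in the double category $\Alg{R}$ and the use of the strengthened dual transfer criterion with $UZ=Eg$ cofibrant. Your two flagged ``obstacles'' are exactly the points the paper's one-line proof silently relies on, and you resolve both correctly.
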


The proof is the exact dual of the proof of Theorem~\ref{th:ModelStr_Cofibrant_object}, with the only exception that the pre-model structure on $\Alg{T}$ is constructed by right transfer. The existence of right transfer of accessible weak factorisation systems is also proved as Theorem 2.6 of \cite{garner2020lifting}. The existence of right transfer of combinatorial weak factorisation systems is standard and follows directly from the small object argument.

\begin{Remark}\label{rk:generalisation}
 As in the dual Remark~\ref{rk:ModelStr_Cofibrant_object_more_gen}, Theorem~\ref{th:ModelStr_fibrant_object} more generally applies to any accessible algebraic weak factorisation system $(L,R)$ such that:

\begin{enumerate}

\item[(1)] Every arrow from a cofibrant object to a fibrant object can be factored as an acyclic cofibration followed by a fibration admitting $R$-algebra structure.

\item[(2)] Any object such that $X \to 1$ admits $R$-algebra structure is fibrant.
\end{enumerate}

Moreover with only Condition (1) we can already prove Conditions \ref{item:alg1},\ref{item:alg2}, \ref{item:alg6} and the restriction of \ref{item:alg4} to arrows between fibrant objects. As in Remark~\ref{rk:endofunctor}, there is also a variant dealing with an accessible weak factorisation system.

We give an example where these weaker conditions are used below.
\end{Remark}

\begin{example}\label{ex:trivialcof}
Let $\Ccal$ be an accessible weak model category and $J$ a (not necessarily generating) set of acyclic cofibrations. We consider the algebraic weak factorisation system $(L,R)$ cofibrantly generated by $J$, so that $R$-algebras are the $J$-algebraic fibrations and the $T$-algebras are the algebraically $J$-fibrant objects. This algebraic weak factorisation system always satisfies Condition $(1)$: indeed, an arrow from a cofibrant object to a fibrant object can be factored as an acyclic cofibration followed by a (core) fibration, and every core fibration is a $J$-fibration and hence admits an $R$-algebra structure. This is enough to construct a Quillen equivalent weak model structure on the category of algebraically $J$-fibrant objects. Condition (2) holds only if the lifting property against $J$ is enough to characterize fibrant objects. \end{example}

By a dual argument to Theorem~\ref{thm:create} we also have.

\begin{Theorem}\label{thm:create2}
Under the assumptions of Theorem~\ref{th:ModelStr_fibrant_object} suppose further that $\Ccal$ is a right semi-model category and that path objects lift along $U:\Alg{T} \to \Ccal$ --- in the sense that if $(X,\boldsymbol{x})$ is a $T$-algebra, then there exists a cylinder object factorisation $X \to PX \to X \times X \in \Ccal$ that lifts to a factorisation of $(X,\boldsymbol{x}) \to (X,\boldsymbol{x}) \times (X,\boldsymbol{x}) \in \Alg{T}$.  Then $\Alg{T}$ is a Quillen model category.
\end{Theorem}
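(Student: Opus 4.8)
The plan is to recognise this statement as the exact dual of Theorem~\ref{thm:create} and to prove it by exhibiting $\Alg{T}$ as a right semi-model category in which every object is fibrant; once that is done, the dual of Properties~\ref{properties:semivsweak}\ref{item:semi1} immediately upgrades it to a genuine Quillen model category. The engine of the argument will be the dual of the characterisation in Properties~\ref{properties:semivsweak}\ref{item:semi2}: a factorisation weak model category is a right semi-model category precisely when it is left saturated, core right saturated, and every fibrant object admits a path object. I would therefore reduce the whole theorem to verifying these three conditions for $\Alg{T}$.

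First I would dispatch the two saturation conditions. Since every object of $\Alg{T}$ is fibrant by Theorem~\ref{th:ModelStr_fibrant_object}\ref{item:alg3}, Properties~\ref{properties:saturated}\ref{item:sat1} applies verbatim (not dually) to give that the trivial and acyclic cofibrations of $\Alg{T}$ coincide, that is, that $\Alg{T}$ is left saturated. For right saturation I would reuse the argument already recorded in Theorem~\ref{th:ModelStr_fibrant_object}\ref{item:alg5}: since $U$ reflects both trivial fibrations (part~\ref{item:alg1}) and acyclic fibrations (part~\ref{item:alg4}), and since $U$ carries the fibrant objects of $\Alg{T}$ to fibrant objects of $\Ccal$, any acyclic fibration of $\Alg{T}$ lies over a core acyclic fibration of $\Ccal$; as $\Ccal$ is a right semi-model category it is core right saturated, so this downstairs map is a trivial fibration, and reflection carries it back upstairs. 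Hence $\Alg{T}$ is right saturated, and in particular core right saturated.

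It then remains to supply path objects, and this is exactly what the extra hypothesis provides: for each $T$-algebra the lifted factorisation is a path object in $\Alg{T}$, and since every object of $\Alg{T}$ is fibrant this yields path objects for all fibrant objects. Feeding left saturation, core right saturation, and the existence of these path objects into the dual of Properties~\ref{properties:semivsweak}\ref{item:semi2} shows that $\Alg{T}$ is a right semi-model category, whereupon the dual of Properties~\ref{properties:semivsweak}\ref{item:semi1} concludes.

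Because the proof is a faithful dualisation of Theorem~\ref{thm:create}, I do not expect a genuine obstacle; the points requiring care are bookkeeping ones. The subtlety worth flagging is that the two saturation conditions enter through opposite doors: left saturation is essentially free, coming from Properties~\ref{properties:saturated}\ref{item:sat1} because all objects are fibrant, whereas right saturation is the real imported content, resting on $\Ccal$ being a right (rather than merely weak) semi-model category. One must also keep straight that $U$ preserves fibrancy --- which holds since $U$ is a right adjoint preserving fibrations, so the terminal object is preserved --- as this is precisely what lets the right-saturation argument descend to and ascend from $\Ccal$.
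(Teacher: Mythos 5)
Your proposal is correct and is essentially the paper's own proof: the paper explicitly obtains Theorem~\ref{thm:create2} as the exact dual of the argument for Theorem~\ref{thm:create}, which is precisely the dualisation you carry out (left saturation for free from all objects being fibrant via Properties~\ref{properties:saturated}\ref{item:sat1}, right saturation reflected along $U$ from the core right saturated base $\Ccal$, lifted path objects, then the dual of Properties~\ref{properties:semivsweak}\ref{item:semi2} and \ref{item:semi1}). Your flagged subtleties about which saturation is ``free'' and about $U$ preserving fibrancy are accurate and match the roles these facts play in the proof of Theorem~\ref{thm:create}.
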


Up to this point the situation is perfectly symmetric.  However, even if $\C$ is combinatorial the model structure on algebraically cofibrant objects can produce right semi-model structures that are not Quillen model categories (see Example~\ref{ex:campbell}).  On the other hand it is known from \cite{Bourke2019Equipping}, which builds on \cite{Nikolaus2011Algebraic}, that if $\C$ is a combinatorial Quillen model category then the weak model structure on algebraically fibrant objects is again a genuine Quillen model category.  

We generalise this result below to the setting of an algebraic weak factorisation system cofibrantly generated by a set of morphisms.  The key is the following technical lemma, which is an abstraction of the path object obstruction given in Theorem 19 of \cite{Bourke2019Equipping}.
 
 \begin{Lemma} \label{lem:lift_for_cof_gen_AWFS} 
 Let $J$ be a set of morphisms in $\Ccal$ and consider the category $\JFib$ of algebraically $J$-fibrant objects.

 Let $f:X \rightarrow Y$ be an arrow in $\JFib$ which admits a factorisation in $\Ccal$:

\[
\begin{tikzcd}
&Z \ar[dr,->>,"q"] & \\
X \ar[ur,"i"] \ar[rr,"f"] & & Y \\
\end{tikzcd}
\]

such that $i$ is a monomorphism and $q$ is a $J$-fibration. Then $Z$ can be equipped with the structure of an algebraically $J$-fibrant object with respect to which $i$ and $q$ are morphisms of $\JFib$.
  
\end{Lemma}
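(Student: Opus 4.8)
The plan is to construct by hand a lifting function $\boldsymbol{z}$ on $Z$ --- that is, an operation assigning to each $j\colon A \to B$ in $J$ and each $a\colon A \to Z$ a filler $\boldsymbol{z}(j,a)\colon B\to Z$ with $\boldsymbol{z}(j,a)\circ j = a$ --- in such a way that the given maps $i$ and $q$ preserve fillers. Write $\boldsymbol{x}$ and $\boldsymbol{y}$ for the lifting functions on $X$ and $Y$, and $\phi$ for the $J$-fibration structure on $q$, so that $\phi(j,a,b)$ is defined whenever $q\circ a = b\circ j$ and satisfies $\phi(j,a,b)\circ j = a$ and $q\circ \phi(j,a,b) = b$. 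Since an algebraically $J$-fibrant structure carries no coherence beyond the single equation $\boldsymbol{z}(j,a)\circ j = a$, I am free to define $\boldsymbol{z}$ by a set-level case analysis on the hom-sets, and this is what makes the construction possible.

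First I would try the naive definition $\boldsymbol{z}(j,a) := \phi\bigl(j,\,a,\,\boldsymbol{y}(j,q\circ a)\bigr)$, where the defining square for $\phi$ commutes because $\boldsymbol{y}(j,q\circ a)\circ j = q\circ a$. This choice makes $q$ a morphism of $\JFib$ for free, since $q\circ \phi(j,a,b)=b$ yields $q\circ\boldsymbol{z}(j,a)=\boldsymbol{y}(j,q\circ a)$. The difficulty --- and the step I expect to be the main obstacle --- is with $i$: when $a=i\circ a'$ one checks that $i\circ\boldsymbol{x}(j,a')$ is \emph{a} filler of the very same square (its composite with $j$ is $i\circ a'$, and, using that $f=q\circ i$ is a morphism of $\JFib$, its composite with $q$ is $\boldsymbol{y}(j,q\circ a)$), but it need not be the \emph{chosen} filler returned by $\phi$. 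This is exactly the obstruction, and it is where the monomorphism hypothesis enters.

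To repair this I would define $\boldsymbol{z}$ by cases. Because $i$ is a monomorphism, each $a\colon A\to Z$ factors through $i$ in at most one way; when $a=i\circ a'$ I set $\boldsymbol{z}(j,a):=i\circ\boldsymbol{x}(j,a')$, and otherwise I set $\boldsymbol{z}(j,a):=\phi(j,a,\boldsymbol{y}(j,q\circ a))$ as above. The filler equation $\boldsymbol{z}(j,a)\circ j = a$ holds in both cases, so $(Z,\boldsymbol{z})$ is algebraically $J$-fibrant. The first case is engineered so that $i$ preserves fillers, since $\boldsymbol{z}(j,i\circ a')=i\circ\boldsymbol{x}(j,a')$ is precisely the morphism condition for $i$.

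It then remains to verify that $q$ still preserves fillers in the factoring case: for $a=i\circ a'$ one computes $q\circ\boldsymbol{z}(j,a)=q\circ i\circ\boldsymbol{x}(j,a')=f\circ\boldsymbol{x}(j,a')=\boldsymbol{y}(j,f\circ a')=\boldsymbol{y}(j,q\circ a)$, using in turn $f=q\circ i$, the fact that $f$ is a morphism of $\JFib$, and $f\circ a'=q\circ a$. Thus both $i$ and $q$ are morphisms of $\JFib$, which completes the argument. The only genuinely delicate point is the one isolated above: the canonical lift through $q$ cannot be expected to restrict to the given lift on $X$, so one must override it on the image of $i$, an override that is well-defined precisely because $i$ is monic.
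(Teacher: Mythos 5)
Your proof is correct and is essentially the same as the paper's: both define the lifting function on $Z$ by the same case split (using $i\circ\boldsymbol{x}(j,a')$ when $a$ factors, uniquely since $i$ is monic, through $i$, and a lift of $\boldsymbol{y}(j,q\circ a)$ through the $J$-fibration $q$ otherwise), and both verify that $i$ and $q$ preserve the chosen fillers by the identical computations. The only difference is your added motivational discussion of why the naive single-case definition fails, which the paper omits.
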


\begin{proof}
The structure of a algebraically $J$-fibrant object on $Z$ is specified by the choice of a dotted lifting:

\[
\begin{tikzcd}
A \ar[d,"j \in J"swap] \ar[r,"k"] & Z \\
B \ar[ur,dotted,"\boldsymbol{z}(j{,}k)" swap] \\  
\end{tikzcd}
\]

for all $j \in J$ and $k$ as above.  We specify this choice in two stages.
\begin{enumerate}
\item Firstly, if the map $k = i \circ k_0$ factors through $i:X \to Z$, then the factorisation $k_0$ is unique and we define $\boldsymbol{z}(j,k) = i \circ \boldsymbol{x}(j,k_0)$ as below.

\[
\begin{tikzcd}
 A \ar[d,"j \in J"swap] \ar[r,"k_0"] \ar[rr,bend left=30,"k"] & X \ar[r,"i"] & Z \\
 B \ar[ur,"\boldsymbol{x}(j{,}k_0)"description,dotted] \ar[urr,dotted,"\boldsymbol{z}(j{,}k)"swap] \\
\end{tikzcd}
\]
\item On the other hand, if $k$ does not factor through $X$ then firstly we use the $\JFib$ structure on $Y$ to obtain a chosen lift as in the outer square below.
\[
\begin{tikzcd}
A \ar[d,"j \in J"swap] \ar[r,"k"] & Z \ar[d,"q"] \\
B \ar[r,"\boldsymbol{y}(j{,}k)"swap] \ar[ur,dotted] & Y \\  
\end{tikzcd}
 \] 
 \end{enumerate}
Since $q$ is a $J$-fibration there exists a dotted lifting, and we define $\boldsymbol{z}(k,j)$ to be one such lifting -- the particular choice proves to be irrelevant.

It remains to check that for this choice of structure on $Z$, both $i$ and $q$ are morphisms of $\JFib$, which is to say that they preserve the chosen liftings.  Firstly, we consider $i:X \to Z$ and a map $l:A \to X$.  Then $i \circ l:A \to X \to Z$ factors uniquely through $X$ as $l = (i \circ l)_0$ so that the lifting $\boldsymbol{z}(j,i \circ l)$ is constructed according to the first rule above -- the first diagram above interpreted at $k = i \circ l$ then states exactly that $i:X \to Z$ is a morphism of $\JFib$.

For $q$, we need to consider the different two cases.  If $k:A \to Z$ factors through $i:X \to Z$ as $k_0$ the we are in the first situation above, so that $\boldsymbol{z}(j,k)=i \circ \boldsymbol{x}(j,k_0)$.  Then 
$$q \circ \boldsymbol{z}(j,k) = q \circ i \circ \boldsymbol{x}(j,k_0) = f \circ \boldsymbol{x}(j,k_0) = \boldsymbol{y}(j,f \circ k_0) = \boldsymbol{y}(j,q \circ k)$$
where in the second last equation, we use that $f:X \to Z$ is a morphism of $\JFib$.  

In the second case, where $k:A \to Z$ does not factor through $X$, $q$ preserves the lifting $\boldsymbol{y}$ by construction.

\end{proof}

\begin{thm} \label{thm:Alg_Fib_Object_strong_version} Let $\Ccal$ be an accessible right semi-model category with a set $J$ of trivial cofibrations such that every $J$-fibrant object is fibrant. Then there is a Quillen model structure on the category of algebraically $J$-fibrant objects $\JFib$ and a Quillen equivalence

\[ F:\Ccal \leftrightarrows \JFib : U\]

where $U$ is the forgeful functor. The fibrations, weak equivalences, trivial fibrations and acyclic fibrations in $\JFib$ are the maps $f$ such that $Uf$ has the corresponding property in $\Ccal$.

\end{thm}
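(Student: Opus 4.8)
The plan is to realise $\JFib$ as the category of algebras for the fibrant replacement monad $T$ of the algebraic weak factorisation system $(L,R)$ cofibrantly generated by $J$, to obtain an accessible weak model structure on $\Alg{T}\cong\JFib$ via the generalised form of Theorem~\ref{th:ModelStr_fibrant_object} recorded in Remark~\ref{rk:generalisation}, and then to upgrade this to a genuine Quillen model structure by applying Theorem~\ref{thm:create2}. All of the real content sits in verifying the single nontrivial hypothesis of that last theorem --- that path objects lift along $U$ --- and this is exactly what Lemma~\ref{lem:lift_for_cof_gen_AWFS} is designed to supply.

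First I would set up the weak model structure. Since $\Ccal$ is an accessible right semi-model category it has an underlying accessible weak model structure, and since $J$ is a set of trivial cofibrations (hence of acyclic cofibrations) Example~\ref{ex:trivialcof} applies. Condition~(1) of Remark~\ref{rk:generalisation} holds automatically there: any arrow from a cofibrant to a fibrant object factors as an acyclic cofibration followed by a core fibration, and every fibration has the right lifting property against the trivial cofibrations in $J$ and so carries $R$-algebra structure. Condition~(2) --- that any $X$ for which $X\to 1$ admits $R$-algebra structure is fibrant --- is precisely the standing hypothesis that every $J$-fibrant object is fibrant, since such an $X$ is exactly a $J$-fibrant object. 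With both conditions in force, Theorem~\ref{th:ModelStr_fibrant_object} (via Remark~\ref{rk:generalisation}) supplies an accessible weak model structure on $\JFib$ in which $U$ detects fibrations, trivial fibrations, acyclic fibrations and weak equivalences, for which $(F,U)$ is a Quillen equivalence, and in which every object is fibrant; this already yields the stated characterisations of the four classes of maps.

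The heart of the argument is the lifting of path objects along $U:\JFib\to\Ccal$. Fix a $T$-algebra $(X,\boldsymbol{x})$; its underlying object $X$ is fibrant in $\Ccal$, being $J$-fibrant. As $\Ccal$ is a right semi-model category, the dual of the characterisation in Properties~\ref{properties:semivsweak}\ref{item:semi2} provides a path object $X\to PX\overset{p}{\twoheadrightarrow}X\times X$ in $\Ccal$, with $p$ a fibration. I would then observe the two facts that make Lemma~\ref{lem:lift_for_cof_gen_AWFS} applicable: the first leg $X\to PX$ is a monomorphism, since its composite with $p$ is the diagonal $X\to X\times X$, which is split monic; and $p$ is a $J$-fibration, since $J$ consists of trivial cofibrations and fibrations lift against these. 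Applying Lemma~\ref{lem:lift_for_cof_gen_AWFS} to the diagonal $(X,\boldsymbol{x})\to(X,\boldsymbol{x})\times(X,\boldsymbol{x})$ in $\JFib$ then equips $PX$ with algebraically $J$-fibrant structure making both $X\to PX$ and $p$ morphisms of $\JFib$, which is exactly a lift of the chosen path object.

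Having verified that path objects lift, Theorem~\ref{thm:create2} concludes that $\JFib\cong\Alg{T}$ is a Quillen model category and that $(F,U)$ is a Quillen equivalence. The characterisations of fibrations, trivial fibrations, weak equivalences and acyclic fibrations are inherited from Theorem~\ref{th:ModelStr_fibrant_object}, and since the structure is now a genuine Quillen model category the trivial fibrations and acyclic fibrations coincide, so all four classes are detected by $U$ as claimed. The step I expect to be the main obstacle is the lifting of path objects; the essential insight that renders it routine is that the defining factorisation of a path object automatically has a monic first leg and a $J$-fibration as its second leg, so that the path-object obstruction isolated in Lemma~\ref{lem:lift_for_cof_gen_AWFS} simply vanishes.
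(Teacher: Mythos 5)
Your proposal is correct and follows essentially the same route as the paper: it invokes Example~\ref{ex:trivialcof} (equivalently Remark~\ref{rk:generalisation}) to obtain the transferred weak model structure on $\JFib$, verifies that path objects lift along $U$ by applying Lemma~\ref{lem:lift_for_cof_gen_AWFS} to the path-object factorisation of the diagonal (whose first leg is monic and whose second leg is a $J$-fibration), and concludes via Theorem~\ref{thm:create2}. The only cosmetic difference is that the paper records the intermediate structure as a left semi-model structure with all objects fibrant, which does not affect the argument.
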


This improves Theorem 19 of \cite{Bourke2019Equipping} by weakening the hypothesis that $\Ccal$ is a model category to merely a right semi-model category, and we are grateful to Martin Bidlingmaier for a remark that led us to further sharpen the hypotheses on $J$.

\begin{proof}
We consider the algebraic weak factorisation system $(L,R)$ cofibrantly generated by the set $J$. Since each member of $J$ is a trivial cofibration and the morphisms of $J$ suffice to characterise the fibrant objects, by Example~\ref{ex:trivialcof}, we obtain a transferred left semi-model structure on $\JFib$ in which all objects are fibrant.

Now consider an object $(X,\boldsymbol{x})$ of $\JFib$ and the diagonal $(X,\boldsymbol{x}) \to (X,\boldsymbol{x})^{2}$.  Since $\Ccal$ is a right semi-model structure its underlying map admits a path object as below.

\[ X \overset{\delta}{\rightarrow} PX \twoheadrightarrow X \times X \]

Since $\delta$ is a (split) monomorphism and the fibration $PX \twoheadrightarrow X \times X$ belongs to $RLP(J)$, Lemma~\ref{lem:lift_for_cof_gen_AWFS} ensures that the factorisation lifts along $U$ to $\JFib$.  Therefore path objects lift along $U$, so that by Theorem~\ref{thm:create2}, we obtain the full model structure on $\JFib$.

\end{proof}

\begin{Example} It was shown in Section 5.4 of \cite{henry2018weakmodel} that the category of semi-simplicial sets carries a weak model structure such that:

\begin{itemize}
\item The cofibrations are the monomorphisms.
\item The fibrant objects and core fibrations are characterised by their lifting property against horn inclusions.
\item Weak equivalences are the maps whose image in the category of simplicial sets, by the functor that freely adds degeneracies, are weak simplicial equivalences --- equivalently, the maps which induce homotopy equivalences on taking geometric realisations.

\end{itemize}

As every object is cofibrant, it follows that its left core saturation --- see Section~\ref{sect:strictify} below --- is a right semi-model category with the same classes of maps as described above.  Therefore by Theorem~\ref{thm:Alg_Fib_Object_strong_version} there is a Quillen equivalent model structure on the category of ``algebraically fibrant'' semi-simplicial sets, that is, semi-simplicial sets equipped with chosen Kan fillers. The fibrations and weak equivalences are the maps that are fibrations or weak equivalences on underlying semi-simplicial sets. 
\end{Example}

\section{Replacing weak model categories by genuine ones}\label{sect:strictify}

The main goal of this short section is to apply the results established so far to show that each combinatorial weak model category is connected, via a zigzag of Quillen equivalences, to a genuine model category.  We begin by recalling from Section 4 of  \cite{henry2019CWMS} the core left and right saturation of an accessible weak model category.

Let $\Ccal$ be an accessible weak model category.  Then $\Ccal$ admits a second accessible weak model structure called its \emph{core left saturation}, $L^{c}\Ccal$, with the same cofibrations, trivial fibrations and core fibrations as $\Ccal$.  Furthermore $L^{c}\Ccal$ is core left saturated and the identity functor $\Ccal \to L^{c}\Ccal$ is a left Quillen equivalence --- in fact it induces an equivalence between the ordinary categories of bifibrant objects preserving all the relevant structure.

Similarly, there is the core right saturation $R^{c}\Ccal$, for which the identity functor $R^{c}\Ccal \to \Ccal$ is a right Quillen equivalence.  Again this an accessible weak model category.

\begin{Theorem}\label{thm:replace1}
Let $\C$ be an accessible weak model category.  Then $\C$ is connected, via a zigzag of Quillen equivalences, to an accessible right semi-model category in which all objects are cofibrant, and also to an accessible left semi-model category in which all objects are fibrant.
\end{Theorem}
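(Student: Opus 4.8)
The plan is to build the two zigzags by composing the cofibrant-object and fibrant-object constructions of Sections~\ref{sect:cofibrant} and~\ref{sect:fibrant} with the core saturation functors recalled just above. The key observation is that Theorems~\ref{th:ModelStr_Cofibrant_object} and~\ref{th:ModelStr_fibrant_object} each produce a weak model category with all objects cofibrant (resp.\ fibrant), and that the \emph{semi-model} conclusions in parts \ref{item:coalg5} and \ref{item:alg5} require a core saturation hypothesis on the base. So the strategy is: pass to a suitable core saturation first to get the hypothesis, then transfer to coalgebras or algebras.

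For the first assertion, I would start with an accessible algebraic realisation $(L,R)$ of the (cofibration, trivial fibration) weak factorisation system on $\C$, which exists since $\C$ is accessible (by Proposition 3.5 of \cite{garner2020lifting}). First I would replace $\C$ by its core left saturation $L^{c}\C$; the identity $\C \to L^{c}\C$ is a left Quillen equivalence and $L^{c}\C$ is core left saturated with the same cofibrations and trivial fibrations, so $(L,R)$ remains an accessible realisation there. Then I apply Theorem~\ref{th:ModelStr_Cofibrant_object} to $L^{c}\C$: part \ref{item:coalg2} gives a Quillen equivalence $U:\Coalg{Q} \leftrightarrows L^{c}\C:G$, part \ref{item:coalg3} says every object of $\Coalg{Q}$ is cofibrant, and --- crucially --- since $L^{c}\C$ is core left saturated, part \ref{item:coalg5} tells us $\Coalg{Q}$ is an accessible right semi-model category. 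The composite
\[
\C \longrightarrow L^{c}\C \longleftarrow \Coalg{Q}
\]
is then the desired zigzag to an accessible right semi-model category in which all objects are cofibrant.

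The second assertion is the exact dual: replace $\C$ by its core \emph{right} saturation $R^{c}\C$, for which the identity $R^{c}\C \to \C$ is a right Quillen equivalence, fix an accessible realisation of the (trivial cofibration, fibration) weak factorisation system, and apply Theorem~\ref{th:ModelStr_fibrant_object}. Since $R^{c}\C$ is core right saturated, part \ref{item:alg5} makes $\Alg{T}$ an accessible left semi-model category, part \ref{item:alg3} makes every object fibrant, and part \ref{item:alg2} gives the Quillen equivalence $F:R^{c}\C \leftrightarrows \Alg{T}:U$. This yields the zigzag
\[
\C \longleftarrow R^{c}\C \longrightarrow \Alg{T}
\]
to an accessible left semi-model category in which all objects are fibrant.

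I do not expect a serious obstacle here, since the heavy lifting has already been done in the earlier theorems; the proof is essentially bookkeeping. The one point requiring genuine care is checking that passing to the core saturation does not disturb the accessibility of the chosen algebraic realisation --- that is, that the realisation $(L,R)$ of the (cofibration, trivial fibration) system on $\C$ is still an accessible realisation of the corresponding system on $L^{c}\C$. This follows because core left saturation, by the recollection above, leaves the cofibrations and trivial fibrations unchanged, so the very same functorial factorisation (and hence the same accessible \awfs) serves for $L^{c}\C$; the dual remark applies to $R^{c}\C$. Once this is observed, assembling the two zigzags is immediate.
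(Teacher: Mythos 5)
Your proof is correct and follows essentially the same route as the paper: pass to the core left (resp.\ right) saturation to secure the hypothesis of part \ref{item:coalg5} (resp.\ \ref{item:alg5}), then apply Theorem~\ref{th:ModelStr_Cofibrant_object} (resp.\ Theorem~\ref{th:ModelStr_fibrant_object}) to obtain the zigzag $\C \to L^{c}\C \leftarrow \Coalg{Q}$ (resp.\ $\C \leftarrow R^{c}\C \to \Alg{T}$). Your closing remark that the comonad $Q$ is unchanged because core left saturation preserves the cofibrations and trivial fibrations is exactly the observation the paper also makes.
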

\begin{proof}
In the first case, we consider the zigzag $\Ccal \to L^{c}\Ccal \leftarrow \Coalg{Q}$ in which $Q$ is the cofibrant replacement comonad for an algebraic realisation of the (cofibration,trivial fibration) weak factorisation system on $L^{c}\Ccal$.  In fact, this is the same $Q$ as on $\Ccal$ itself, but the weak model structure might differ.  

The first component is a left Quillen equivalence as mentioned above, and by Theorem~\ref{th:ModelStr_Cofibrant_object} the second component is a left Quillen equivalence.  Moreover since $\L^{c}\Ccal$ is core left saturated, by Theorem~\ref{th:ModelStr_Cofibrant_object} the second component is a right semi-model category with all objects cofibrant.

The proof of the second claim proceeds in the same manner, but now using the core right saturation and Theorem~\ref{th:ModelStr_fibrant_object}.
\end{proof}

\begin{Theorem}
Let $\C$ be a combinatorial weak model category.  Then $\C$ is connected, via a zigzag of Quillen equivalences, to a combinatorial model category in which all objects are fibrant.
\end{Theorem}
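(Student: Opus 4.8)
The goal is to connect a combinatorial weak model category $\C$ to a combinatorial Quillen model category in which all objects are fibrant, via a zigzag of Quillen equivalences. The natural strategy is to compose the two constructions developed in the paper --- first pass to algebraically cofibrant objects to arrange that all objects are cofibrant, and then pass to algebraically fibrant objects to arrange that all objects are fibrant, while preserving cofibrancy of all objects so that the final result is a genuine Quillen model category. The key enabling facts are Theorem~\ref{th:ModelStr_Cofibrant_object} (left transfer to $\Coalg{Q}$), Theorem~\ref{thm:Alg_Fib_Object_strong_version} (right transfer to $\JFib$ producing a genuine model structure), and the observation in Properties~\ref{properties:semivsweak}\ref{item:semi1} that a left semi-model category in which every object is cofibrant is already a Quillen model category.

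\emph{First I would} reduce to the core left saturated case. Since $\C$ is combinatorial, it is in particular accessible, and by Theorem~\ref{thm:replace1} (using the core left saturation $L^{c}\C$ and the cofibrant replacement comonad $Q$) we obtain a zigzag $\C \to L^{c}\C \leftarrow \Coalg{Q}$ of Quillen equivalences in which $\Coalg{Q}$ is an accessible right semi-model category with all objects cofibrant. Here I must check that $\Coalg{Q}$ is in fact \emph{combinatorial}, not merely accessible: this follows from part~\ref{item:coalg6} of Theorem~\ref{th:ModelStr_Cofibrant_object}, since $\C$ (and hence $L^{c}\C$, which shares its cofibrations and trivial fibrations) is combinatorial. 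So without loss of generality I may replace $\C$ by a \emph{combinatorial right semi-model category in which all objects are cofibrant}.

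\emph{Next I would} apply the fibrant-object construction to this new base. Because $\C$ is now a combinatorial right semi-model category, it is cofibrantly generated; let $J$ be a generating set of trivial cofibrations such that the $J$-fibrant objects are exactly the fibrant objects (this characterisation of fibrant objects by lifting against generating trivial cofibrations is exactly what cofibrant generation supplies). Theorem~\ref{thm:Alg_Fib_Object_strong_version} then produces a \emph{Quillen model structure} on $\JFib$ together with a Quillen equivalence $F:\C \leftrightarrows \JFib:U$, in which all objects are fibrant. The crucial point --- and the step I expect to be the main obstacle --- is to verify that all objects of $\JFib$ remain \emph{cofibrant}, for only then is the resulting structure a full model category with all objects both fibrant and cofibrant, as claimed. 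Since the cofibrations in $\JFib$ are created by $U$ (part~\ref{item:alg1} of Theorem~\ref{th:ModelStr_fibrant_object}) and an object is cofibrant exactly when $\emptyset \hookrightarrow X$ is a cofibration, cofibrancy of $(X,\boldsymbol{x})$ amounts to cofibrancy of $U(X,\boldsymbol{x})$ in $\C$; as all objects of $\C$ are cofibrant after the first step, all objects of $\JFib$ are cofibrant too. One must also confirm that combinatoriality is preserved, which is part~\ref{item:alg6} of Theorem~\ref{th:ModelStr_fibrant_object}.

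\emph{Finally}, stringing together the two zigzags yields
\[
\C \;\to\; L^{c}\C \;\leftarrow\; \Coalg{Q} \;\to\; \JFib,
\]
a zigzag of Quillen equivalences whose right-hand end $\JFib$ is a combinatorial Quillen model category in which all objects are fibrant. The only genuine subtleties are bookkeeping ones: ensuring the intermediate $\Coalg{Q}$ is not merely accessible but combinatorial (so that $\C$ in the second step is combinatorial and Theorem~\ref{thm:Alg_Fib_Object_strong_version} applies), and ensuring the hypothesis of Theorem~\ref{thm:Alg_Fib_Object_strong_version} that $J$-fibrant objects coincide with fibrant objects is met --- both of which are immediate from combinatoriality. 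The conceptual content is entirely carried by the two transfer theorems already established.
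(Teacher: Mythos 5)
Your proposal is correct and follows essentially the same route as the paper: the zigzag $\C \to L^{c}\C \leftarrow \Coalg{Q} \to \JFib$, with combinatoriality tracked via Theorem~\ref{th:ModelStr_Cofibrant_object}\ref{item:coalg6} and Theorem~\ref{th:ModelStr_fibrant_object}\ref{item:alg6}, is exactly the paper's argument (the paper writes $\Alg{T}\cong\JFib$ for the fibrant replacement monad $T$ on $\Coalg{Q}$). The only superfluous step is your verification that all objects of $\JFib$ remain cofibrant: Theorem~\ref{thm:Alg_Fib_Object_strong_version} already delivers a genuine Quillen model structure outright, and the statement only claims all objects are fibrant.
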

\begin{proof}
We consider the zigzag of left Quillen equivalences $\Ccal \to L^{c}\Ccal \leftarrow \Coalg{Q}$ of Theorem~\ref{thm:replace1}.  Since $\Ccal$ is combinatorial, so too is $L^{c}\Ccal$ and $\Coalg{Q}$ respectively.  Then letting $T$ denotes the induced fibrant replacement monad for the combinatorial right semi-model structure $\Coalg{Q}$, it follows from Theorem~\ref{thm:Alg_Fib_Object_strong_version} that $\Alg{T}$ is a combinatorial model category with all objects fibrant and the free functor $\C \to \Alg{T}$ a left Quillen equivalence, so that we have a zigzag of left Quillen equivalences as below
$$\Ccal \to L^{c}\Ccal \leftarrow \Coalg{Q} \to \Alg{T}.$$
\end{proof}


\begin{thebibliography}{10}

\bibitem{Adamek1994Locally}
{\sc Ad{\'a}mek, J. and Rosick{\'y}, J.}
\newblock {\em Locally Presentable and Accessible Categories}, vol.~189 of {\em
  London Mathematical Society Lecture Note Series}.
\newblock Cambridge University Press, 1994.


\bibitem{barwick2010left}
{\sc Barwick , Clark.}
\newblock On left and right model categories and left and right {B}ousfield localisations.
\newblock {\em Homology, Homotopy and Applications}, 12(2):245--320, 2010.

\bibitem{Beke2000Sheafifiable}
{\sc Beke, Tibor.}
\newblock{ Sheafifiable homotopy model categories.}
\newblock{ \em Math. Proc. Cambridge Philos. Soc. 129} (2000), no. 3, 447--475.

\bibitem{Bourke2016Accessible}
{\sc Bourke, J. and Garner, R.}
\newblock {Algebraic weak factorisation systems I: Accessible awfs.}
\newblock {\em Journal of Pure and Applied Algebra 220} (2016), 108--147.


\bibitem{Bourke2019Equipping}
{\sc Bourke, John.}
\newblock Equipping weak equivalences with algebraic structure. 
\newblock {\em Math. Z.}, 294(3-4):995--1019, 2020.

\bibitem{Blumberg2014}
{\sc Blumberg, A. and Riehl, E.}
\newblock {Homotopical resolutions associated to deformable adjunctions.}
\newblock 
Algebr. Geom. Topol. 14 (2014) 3021-3048

\bibitem{Campbell2020}
{\sc Campbell, Alexander.}
\newblock {The model category of algebraically cofibrant $2$-categories.}
\newblock In preparation.

\bibitem{ching2014coalgebraic}
{\sc Ching, M. and Riehl, E.}
\newblock Coalgebraic models for combinatorial model categories.
\newblock {\em {Preprint ArXiv:1403.5303}}, 2014.

\bibitem{Gambino2019simplicial}
{\sc Gambino, N. and Henry, S.}
\newblock Towards a constructive simplicial model of {U}nivalent {F}oundations.
\newblock {\em {Preprint ArXiv:1905.06281}}, 2019.

\bibitem{Garner2011Understanding}
{\sc Garner, Richard.}
\newblock Understanding the small object argument.
\newblock {\em Applied Categorical Structures 17}, 3 (2009), 247--285.


\bibitem{garner2020lifting}
{\sc Garner, R., K{e}dziorek, M., and Riehl, E.}
\newblock {Lifting accessible model structures.}
\newblock {\em Journal of Topology}, 13(1):59--76, 2020.

\bibitem{Grandis2006Natural}
{\sc Grandis, M., and Tholen, W.}
\newblock Natural weak factorization systems.
\newblock {\em Archivum Mathematicum 42}, 4 (2006), 397--408.


\bibitem{henry2018weakmodel}
{\sc Henry, Simon.}
\newblock Weak model categories in constructive and classical mathematics.
\newblock {\em {Preprint ArXiv:1807.02650v3}}, 2018.


\bibitem{henry2019CWMS}
{\sc Henry, Simon.}
\newblock Combinatorial and accessible weak model categories.
\newblock {\em {Preprint arXiv:2005.02360}}, 2020.


\bibitem{Hess2017A-necessary}
{\sc Hess, K., Kedziorek, M., Riehl, E. and Shipley, B.}
\newblock A necessary and sufficient condition for induced model structures.
\newblock {\em Journal Of Topology 10} (2017), 324--369

\bibitem{Hovey1999Model}
{\sc Hovey, Mark}
\newblock {\em Model categories}, vol.~63 of {\em Mathematical Surveys and
  Monographs}.
\newblock American Mathematical Society, 1999.


\bibitem{makkai2014cellular}
{\sc Makkai, M. and Rosick{\'y}, J.}
\newblock Cellular categories.
\newblock {\em Journal of Pure and Applied Algebra}, 218(9):1652--1664, 2014.


\bibitem{Nikolaus2011Algebraic}
{\sc Nikolaus, Thomas.}
\newblock{Algebraic models for higher categories}
\newblock{\em  Indag. Math. (N.S.)} 21 (2011), no. 1--2, 52--75.


\bibitem{Quillen1967Homotopical}
{\sc Quillen, Daniel.}
\newblock {\em Homotopical Algebra}, vol.~43 of {\em
  Lecture Notes in Mathematics}.
\newblock Springer, 1967.


\bibitem{Riehl2011Algebraic}
{\sc Riehl, Emily.}
\newblock Algebraic model structures.
\newblock {\em New York Journal of Mathematics 17\/} (2011), 173--231.

\bibitem{Riehl2014Categorical}
{\sc Riehl, Emily.}
\newblock {\em Categorical Homotopy Theory}, New Mathematical Monographs, Cambridge University Press, 2014.


\bibitem{Rosicky2017Accessible}
{\sc Rosick{\'y}, Ji{\v{r}}{\'{\i}}.}
\newblock{Accessible model categories.}
\newblock{Appl. Cat. Str.} 25 (2017), 187--196.

\bibitem{Beke2000Sheafifiable}
{\sc Beke, Tibor.}
\newblock{ Sheafifiable homotopy model categories.}
\newblock{ \em Math. Proc. Cambridge Philos. Soc. 129} (2000), no. 3, 447--475.

\bibitem{spitzweck2001operads}
{\sc Spitzweck, Mark.}
\newblock Operads, algebras and modules in model categories and motives.
\newblock {\em PhD thesis, Ph. D. thesis (Universit{\"a}t Bonn)}, 2001.

\bibitem{Weber2007Familial}
{\sc Weber, Mark.}
\newblock Familial $2$-functors and parametric right adjoints.
\newblock {\em Theory and Applications of Categories 18} (2007), no. 22, 665--732.


\end{thebibliography}
\end{document}